\newtheorem{theorem}{Theorem}
\newtheorem{conjecture}{Conjecture}
\title[On Mixed cages]{On Mixed Cages}
\author[Geoffrey Exoo]{Geoffrey Exoo\affiliationmark{1}}
\affiliation{Indiana State University, Terre Haute, IN, USA}
\keywords{cage,mixed graph,girth,voltage graph}
\begin{document}

\publicationdata
{vol. 25:2}
{2023}
{14}
{10.46298/dmtcs.11057}
{2023-03-10; 2023-03-10; 2023-06-29}
{2023-08-13}

\maketitle

\begin{abstract}
Mixed graphs have both directed and undirected edges.  We consider
mixed graphs that are regular in both directed and undirected degree and
with given girth.
Araujo-Pardo, Hernández-Cruz, and Montellano-Ballesteros recently
established a lower bound for regular mixed graphs with directed degree one that
generalizes the Moore bound.
In this paper we show that, in at least one non-trivial case, their
bound can be achieved.  For a few other specific values of direct degree, undirected
degree and girth we are able to identify graphs (mixed cages) of minimum possible
order.
We also obtain some fairly tight bounds for a few other cases.
\end{abstract}

\section{Introduction}
\label{sec:in}

The cage problem, which asks for the identification of smallest
regular graphs of given girth, has been extensively studied for
graphs and for digraphs.
A series of survey papers have
tracked progress on the problem for graphs \cite{wongsurvey, biggssurvey, cagesurvey}.
The digraph version of the problem was first considered in
1970 by Behzad, Chartrand and Wall \cite{bcw}.
Recently, Araujo-Pardo, Hernández-Cruz, and Montellano-Ballesteros \cite{ahm}
considered the problem for mixed graphs and raised a number of interesting questions.

Finding graphs that give reasonable upper bounds and proving lower bounds both seem
to be difficult problems for the undirected case.
The situation appears to be different for digraphs.
It is generally believed that the extremal digraphs were correctly identified
by Behzad, Chartrand and Wall \cite{bcw}, who proposed a fundamental conjecture
discussed below. For mixed graphs,
as suggested in \cite{ahm}, the level of difficulty in finding
good candidate graphs may depend on the relative sizes
of the directed and undirected degrees.

In this paper, we address and resolve some of the questions
raised in \cite{ahm}.
In particular, it is shown below that the Moore-like bound they established for
directed degree $1$ can be achieved.  In the next section, we review basic
notation and terminology.  Then we review the basic bounds of Moore, of Behzad,
Chartrand and Wall, and of Araujo-Pardo, Hernández-Cruz, and Montellano-Ballesteros.
Finally, we present a number of new results.

\section{Notation and Terminology}
\label{sec:notation}

A mixed graph is a graph with both directed and undirected edges.  We refer to directed
edges as {\em arcs} and undirected edges as {\em edges}.
The sets of vertices, edges, and arcs for a mixed graph $G$ are denoted $V(G)$, $E(G)$, and
$A(G)$, respectively.
The {\em degree} of a vertex $v$ in a
mixed graph $G$ is the number of edges incident with the vertex and is denoted
$deg(v,G)$ or simply $deg(v)$ if $G$ is clear from the context.  Similarly, the {\em in-degree}
and {\em out-degree} of $v$, denoted $ideg(v,G)$ and $odeg(v,G)$, are the numbers of arcs
incident to and from $v$.
A mixed graph $G$ is regular if the degree and out-degree are constant as
$v$ ranges over the vertices of $G$. If, in addition,
the in-degree is constant then the graph is
{\em totally regular}.

A {\em cycle} in a mixed graph is a sequence of vertices $v_0, v_1, \cdots, v_k$ such that
$v_0 = v_k$, and each pair of consecutive vertices $(v_i,v_{i+1})$ is either joined
by an edge or an arc (directed from $v_i$ to $v_{i+1}$), and there are no repeated edges or arcs.
The {\em girth} of a mixed graph is the length of a shortest cycle.  Note that the definition
considers the possibility of $1$-cycles (loops) and $2$-cycles.  As usual, we use $C_k$ to
denote an undirected $k$-cycle, and $\vec{C}_k$ to denote a directed $k$-cycle.

An $(r,z,g)$-graph\footnote{This notation is compatible with most of the degree/diameter literature.
In \cite{ahm} a notation is used that reverses the order of $r$ and $z$.}
is a regular mixed graph with degree $r$, out-degree $z$, and girth $g$.
We are interested in cases where $r,z > 0$ and $g \geq 5$.
An $(r,z,g)$-cage is an $(r,z,g)$-graph of minimum possible order.
We denote this minimum order by $f(r,z,g)$.

For graphs or digraphs $G$ and $H$, the
{\it graph composition} $G[H]$ is the graph with vertex set $V(G) \times V(H)$ where
$(g_1,h_1)$ is adjacent to $(g_2,h_2)$ if $g_1$ is adjacent to $g_2$ or if $g_1 = g_2$ and
$h_1$ is adjacent to $h_2$ \cite{harary}.  This is also known as the {\it lexicographic product}.

Finally, we note that modular arithmetic on subscripts will frequently be employed
when specifying constructions.
In obvious cases, we will try to avoid cluttering the presentation
with frequent reminders.
In one case we use expressions of the form $a + b\;mod\; c$ and
remind the reader that the modulus operator has the precedence
of multiplication.

\section{Basic Bounds}
\label{sec:basic}

The following conjecture is fundamental to the cage problem for directed
graphs.

\begin{conjecture}[Behzad-Chartrand-Wall, \cite{bcw}]
The order of a smallest $r$ regular digraph of girth $g$ is $n = r(g-1)+1$.
\end{conjecture}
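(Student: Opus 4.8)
The statement has an easy half and a hard half. The easy half is the \emph{upper bound}: exhibiting an $r$-regular digraph of girth $g$ on exactly $r(g-1)+1$ vertices. The plan is to use the circulant digraph $D$ with vertex set $\mathbb{Z}_N$, where $N = r(g-1)+1$, and arc set $\{\,(i,\,i+j) : i \in \mathbb{Z}_N,\ 1 \le j \le r\,\}$. Since $0 < r < N$, the jumps $1,\dots,r$ are distinct nonzero residues, so every vertex has out-degree $r$ and in-degree $r$, hence $D$ is totally regular. For the girth, note that a closed directed walk of length $k$ corresponds to jumps $j_1,\dots,j_k \in \{1,\dots,r\}$ with $\sum_t j_t \equiv 0 \pmod N$; the sum is then a positive multiple of $N$, so $\sum_t j_t \ge N$, while $\sum_t j_t \le kr$, and $kr \ge r(g-1)+1$ forces $k \ge g$. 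Conversely, $g-1$ jumps of size $r$ followed by one jump of size $1$ visit $g$ distinct vertices and close up, so the girth of $D$ is exactly $g$. This settles the upper bound.

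The \emph{lower bound} --- that no $r$-regular digraph of girth $g$ has fewer than $r(g-1)+1$ vertices --- is the hard half, and this is where the word ``conjecture'' is earned. The natural first move is a Moore-type count along a shortest cycle $C = v_0 v_1 \cdots v_{g-1} v_0$: for $0 \le i \le g-2$ put $A_i = N^+(v_i) \setminus \{v_{i+1}\}$, so $|A_i| = r-1$, and check that each $A_i$ is disjoint from $V(C)$, since an arc from $v_i$ to another cycle vertex would short-circuit $C$. Were the $A_i$ also pairwise disjoint one would get $n \ge |V(C)| + \sum_{i=0}^{g-2}|A_i| = g + (g-1)(r-1) = r(g-1)+1$, exactly the bound. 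For $r = 1$ the sets $A_i$ are empty and the argument is already conclusive ($n \ge g$), and with care one can push an ad hoc version through for small $r$ or small $g$ (the case $g \le 3$, for instance, is elementary). But the obstacle is genuine: two vertices of $C$ may share an out-neighbour lying off $C$ without creating any short cycle, so the $A_i$ need not be pairwise disjoint, and in general the count degenerates to the useless $n \ge r + g - 1$.

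The honest route to the general lower bound goes through the \emph{Caccetta--H\"aggkvist conjecture}: every digraph on $n$ vertices with minimum out-degree at least $n/k$ contains a directed cycle of length at most $k$. Applying this with $k = g-1$ to an $r$-regular digraph with no cycle of length $\le g-1$ yields $r < n/(g-1)$, that is, $n \ge r(g-1)+1$ --- precisely what is wanted. So the plan reduces the hard half of Behzad--Chartrand--Wall to Caccetta--H\"aggkvist, and in place of a uniform proof one would target the ranges in which Caccetta--H\"aggkvist (or a direct argument) is available: small out-degree, small girth, or approximate and asymptotic forms. I expect this reduction to be the main content, and the remaining gap --- a fully unconditional proof for all $r$ and $g$ --- to be exactly as hard as a well-known open problem, which is why the statement is posed as a conjecture rather than a theorem.
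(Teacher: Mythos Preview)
The paper offers no proof of this statement: it is stated as an open conjecture, followed only by the description of the extremal candidates $BCW(r,g)$ (your circulant on $\mathbb{Z}_{r(g-1)+1}$ with jumps $1,\dots,r$). Your upper-bound construction is precisely $BCW(r,g)$ and your girth verification is correct; you then go further than the paper by explaining why the naive Moore-type count along a shortest cycle breaks down (shared out-neighbours off the cycle need not create short cycles) and by recording the standard reduction of the lower bound to the Caccetta--H\"aggkvist conjecture. That additional material is correct and well known in the literature, and your conclusion --- that the hard half is exactly as hard as a famous open problem --- is precisely why the paper labels the statement a conjecture rather than a theorem.
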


Behzad, Chartrand and Wall also identified the digraphs, for each $r$ and $g$, that they believed to be extremal,
These digraphs, which we denoted $BCW(r,g)$, have orders $r(g-1)+1$ with vertices $v_i$
for $0 \leq i \leq r(g-1)$
and arcs $v_iv_{i+j}$ for $1 \leq j \leq r$.
A small example is shown in Figure~\ref{figbcw}.

\begin{figure}[htbp]
\centering
\setlength{\abovecaptionskip}{25pt}
\tikzset{fontscale/.style = {font=\relsize{#1}}}
\begin{tikzpicture}[scale=1.0,line width=1pt]
\tikzstyle{arc}=[thick,
                 decoration={markings,mark=at position 0.70 with {\arrow[scale=1.0,>=triangle 45]{>}}},
                 postaction={decorate}];

\tikzstyle{fred}=[draw=black, fill=yellow, thick,
  shape=circle, minimum height=2.5mm, inner sep=1, text=black, font=\bfseries];

\newcommand\orad{3}
\newcommand\rotd{27.6923077}

\begin{scope}[rotate=90]

\foreach \rot in {0,1,2,...,12}
{
    \begin{scope}[rotate=\rot*\rotd]
        \draw[arc] (\rotd:\orad) -- (0:\orad);
        \draw[arc] (\rotd*2:\orad)-- (0:\orad);
        \draw[arc] (\rotd*3:\orad)-- (0:\orad);
    \end{scope}
}

\foreach \rot in {0,1,2,...,12}
{
    \begin{scope}[rotate=\rot*\rotd]
        \node[fred,fill=yellow!50!white] at (0:\orad) [fontscale=2] {};
    \end{scope}
}

\end{scope}

\end{tikzpicture}
\caption{The Behzad-Chartrand-Wall Graph for degree $3$ and girth $5$ has order $13$.}
\label{figbcw}
\end{figure}

In \cite{ahm} 
Araujo-Pardo, Hernández-Cruz, and Montellano-Ballesteros considered
the problem of finding mixed cages.
They focused on the case $z=1$ and found a lower bound for $f(r,1,g)$
based on the well known Moore bound for undirected graphs.
Their idea is to
attach undirected Moore trees to each vertex of a directed path of order $g$,
choosing trees whose depth is as large as possible while still guaranteeing that
all tree vertices are distinct.

% add figure for the AHM bound

Recall that the Moore bound for an $r$-regular graph of diameter $d$ is given by:

\begin{equation}
n(r,d) \ge \frac{r(r-1)^{d}-2}{r-2}
\end{equation}

Let $v_0, v_1, \cdots , v_{g-1}$ be the vertices of a path of order $g$.
Attach a Moore tree of depth $i$ to $v_i$ and $v_{g-1-i}$ (if distinct),
for $0 \leq i \leq \lfloor g/2 \rfloor $.
This gives the following bound.

\begin{theorem}[The AHM Bound]

\begin{equation*}
f(r,1,g) \geq \mathlarger{\mathlarger{\sum}}_{i=0}^{g-1}n(r,\, {\tt min}(i,g{-}i{-}1))
\end{equation*}

\end{theorem}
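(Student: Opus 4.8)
The plan is to show that any $(r,1,g)$-graph $G$ contains $g$ pairwise vertex-disjoint sets $B_0,\dots,B_{g-1}$ with $|B_i|\ge n(r,\min(i,g-i-1))$, exactly the balls of the construction described before the statement (a directed path carrying an undirected Moore tree at each vertex); summing their sizes then gives $|V(G)|\ge\sum_{i=0}^{g-1}n(r,\min(i,g-i-1))$, and since $G$ is arbitrary the same bound holds for $f(r,1,g)$. To build the spine, observe that the out-degree being the constant $1$ means that following the unique out-arc from any vertex eventually repeats a vertex and so traces a directed cycle; that cycle has length at least $g$ by the girth hypothesis, so $G$ contains a directed path $v_0\to v_1\to\cdots\to v_{g-1}$ on $g$ distinct vertices. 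Put $d_i=\min(i,g-i-1)$ and let $B_i$ be the set of vertices at undirected distance at most $d_i$ from $v_i$. Every undirected cycle is a cycle of $G$, so the undirected girth is at least $g$, which exceeds $2d_i$; hence the standard Moore-bound counting around $v_i$ applies and gives $|B_i|\ge n(r,d_i)$.

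\textbf{The crux is that the $B_i$ are pairwise disjoint.} Suppose some $x$ lies in $B_i\cap B_j$ with $i<j$, and among all such configurations pick one minimising $a+b+(j-i)$, where $a$ and $b$ are the undirected distances from $v_i$ and from $v_j$ to $x$; note $a\le d_i\le i$ and $b\le d_j\le g-1-j$. Fix shortest undirected paths $P$ from $v_i$ to $x$ and $Q$ from $v_j$ to $x$. A short argument shows $P$ and $Q$ cannot share an edge: the endpoints of a shared edge would again lie in $B_i\cap B_j$ but with a strictly smaller value of the minimised quantity, contradicting the choice. Now go along the directed sub-path $v_i\to\cdots\to v_j$, then back along $Q$, then back along $P$: this closed walk has no repeated arc (all arcs lie on the spine) and no repeated edge (the edges split between the edge-disjoint $P$ and $Q$), so by the notion of cycle fixed in Section~\ref{sec:notation} it is a cycle, of length $(j-i)+a+b\le(j-i)+i+(g-1-j)=g-1<g$, contradicting the girth. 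Hence the $B_i$ are disjoint and $|V(G)|\ge\sum_{i=0}^{g-1}|B_i|\ge\sum_{i=0}^{g-1}n(r,\min(i,g-i-1))$.

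I expect the disjointness step to be the main obstacle: in a mixed graph a closed walk of length less than $g$ need not contain a short cycle, so one really has to arrange that the three paths involved overlap only through cancellable edges, which is what the minimal-counterexample device accomplishes. The remaining care is in the degenerate small-length cases ($a=0$, $b=0$, or $j=i+1$), where the closed walk collapses to a loop or a digon; these are excluded directly because $g\ge5$.
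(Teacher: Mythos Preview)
Your argument is correct and follows exactly the approach the paper sketches (and attributes to \cite{ahm}): a directed spine of $g$ vertices with an undirected Moore tree of depth $\min(i,g{-}1{-}i)$ hung at each $v_i$, shown to be pairwise disjoint. The paper gives no details beyond that one-sentence sketch, and your minimal-counterexample device for forcing the two undirected shortest paths to be edge-disjoint---so that the resulting closed walk is a cycle in the sense of Section~\ref{sec:notation}---fills in precisely the step the paper defers to the cited reference.
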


\medskip

A small example AHM tree is shown in Figure~\ref{figahm}.  We will see that this bound
can, at least occasionally, be achieved.  A central question for mixed cages is to
determine how often it is achieved.

\begin{figure}[ht]
\centering
\begin{tikzpicture}[scale=0.5]
\tikzstyle{vertex}=[draw=black, fill=yellow!50!white, thick, shape=circle, inner sep=0, minimum height=4.0]

    \foreach \xco in {0,4,...,16}
    {
        \draw[thick,decoration={markings,mark=at position 0.7 with
            {\arrow[scale=0.8,>=triangle 45]{>}}},postaction={decorate}] (\xco,6) -- (\xco+4,6);
    }
    \foreach \xco in {4,8,12,16} {
        \draw[line width=1.0,color=black] (\xco,6) -- (\xco,4);
        \draw[line width=1.0,color=black] (\xco,6) -- (\xco-1.2,4);
        \draw[line width=1.0,color=black] (\xco,6) -- (\xco+1.2,4);
    }
    \foreach \xco in {8,12}
    {
        \draw[line width=1.0,color=black] (\xco-1.2,4) -- (\xco-1.5,2);
        \draw[line width=1.0,color=black] (\xco-1.2,4) -- (\xco-0.9,2);
        \draw[line width=1.0,color=black] (\xco,4) -- (\xco-0.3,2);
        \draw[line width=1.0,color=black] (\xco,4) -- (\xco+0.3,2);
        \draw[line width=1.0,color=black] (\xco+1.2,4) -- (\xco+0.9,2);
        \draw[line width=1.0,color=black] (\xco+1.2,4) -- (\xco+1.5,2);
    }
    \foreach \xco in {0,4,...,20}
    {
        \node[vertex] at (\xco,6) {};
    }
    \foreach \xco in {4,8,12,16}
    {
        \node[vertex] at (\xco,4) {};
        \node[vertex] at (\xco-1.2,4) {};
        \node[vertex] at (\xco+1.2,4) {};
    }
    \foreach \xco in {8,12}
    {
        \node[vertex] at (\xco-1.5,2) {};
        \node[vertex] at (\xco-0.9,2) {};
        \node[vertex] at (\xco-0.3,2) {};
        \node[vertex] at (\xco+0.3,2) {};
        \node[vertex] at (\xco+0.9,2) {};
        \node[vertex] at (\xco+1.5,2) {};
    }
\end{tikzpicture}
\caption{The AHM Tree for $r=3$, $z=1$, $g=6$.}
\label{figahm}
\end{figure}
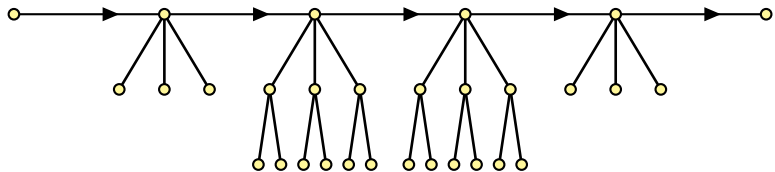

\medskip

Upper bounds on $f(r,1,g)$ can be obtained from
known cages (or candidates) of degree $r+2$ and girth $g$ whenever the
graph is Hamiltonian (or at least has a $2$-factor with no short cycles).
We shall refer to the upper bound obtained this
way as the {\em Cage Bound}.  In general, the Cage Bound is probably
weak, but in at least one case it may be useful.
The Hoffman-Singleton graph is
the $(7,5)$-cage, and is Hamiltonian.  If any Hamiltonian cycle is
cyclically oriented, a $(5,1,5)$ mixed graph of order $50$ is obtained.

It appears that most cages are Hamiltonian.  A theorem of Singer \cite{singer}
on collineations implies that girth $6$ cages constructed from field planes
are Hamiltonian.   More recently, Lazebnik, Mellinger and Vega \cite{lmv}
showed that the incidence graphs of all finite projective planes are Hamiltonian.
This author has tested all the remaining
cages and cage candidates
listed in Table 6 of the Dynamic Cage Survey \cite{cagesurvey} that are
relevant to the results in this paper and found them all to be Hamiltonian.
The cage bound may at least serve as a useful benchmark.  One might view finding
$(r,1,g)$-graphs whose orders are less that the orders of the
$(r{+}2,g)$-cages (or candidates) as significant first steps.

\section{The Case $r + z \leq 3$}
\label{sec:caserz3}

In this section, mixed cages for $r,z \leq 2$ are determined.
The first theorem uses what could be viewed as a degenerate case of
the AHM bound, as it employs Moore trees of degree one, which are
single edges.

\begin{theorem}
\begin{equation*}
f(1,1,g) =  2g-2
\end{equation*}
\end{theorem}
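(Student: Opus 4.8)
The plan is to prove the two matching inequalities $f(1,1,g)\le 2g-2$ and $f(1,1,g)\ge 2g-2$. For the lower bound I will invoke the AHM bound with $r=1$. A $1$-regular graph is a disjoint union of single edges, so an undirected ball of radius $d\ge 1$ contains exactly $2$ vertices, in agreement with the value $n(1,d)=2$ returned by the Moore bound (reading $0^{0}=1$), while $n(1,0)=1$. In the sum $\sum_{i=0}^{g-1}n(1,\min(i,g-i-1))$ the argument $\min(i,g-i-1)$ equals $0$ for exactly the two indices $i=0$ and $i=g-1$ and is at least $1$ for the remaining $g-2$ indices, so the bound evaluates to $2\cdot 1+(g-2)\cdot 2=2g-2$. (A direct argument is also available: in any $(1,1,g)$-graph the arcs form a functional digraph and hence contain a directed cycle $\vec C$ of some length $k\ge g$, while the edges form a perfect matching. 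If some edge joins two vertices of $\vec C$, the two directed subpaths of $\vec C$ between them, each completed by that edge, are cycles whose lengths sum to $k+2$; as each is at least $g$, we get $k\ge 2g-2$. Otherwise each vertex of $\vec C$ is matched to a vertex outside $\vec C$, yielding at least $2k\ge 2g$ vertices. Either way the order is at least $2g-2$.)

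For the upper bound I will construct a totally regular $(1,1,g)$-graph $G$ on $2g-2$ vertices. Let $V(G)=\mathbb{Z}_{2g-2}$, put an arc $i\to i+1$ for every $i$, and an undirected edge $\{i,\,i+g-1\}$ for every $i$. Because $g-1$ has additive order $2$ modulo $2g-2$, these undirected edges form a perfect matching, so every vertex of $G$ has out-degree, in-degree and degree all equal to $1$. It then remains only to verify that $G$ has girth exactly $g$.

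The mechanism of the girth computation is that traversing an arc changes a vertex label by $+1$, whereas traversing an undirected edge in either direction changes it by $g-1\pmod{2g-2}$, since $-(g-1)\equiv g-1$. Hence any closed walk that uses $a$ arcs and $b$ edges satisfies $a+b(g-1)\equiv 0\pmod{2g-2}$ and has length $a+b$. Since the edges are pairwise vertex-disjoint, a closed trail must use at least one arc. If $b=0$ then $a\equiv 0$, forcing $a\ge 2g-2$; if $b$ is even and positive then again $a\equiv 0$, so the length is at least $2g$; and if $b$ is odd then $a\equiv g-1$, so $a\ge g-1$ and the length is at least $g-1+b\ge g$, with equality only for $b=1$, $a=g-1$, which is realized by the cycle $0\to 1\to\cdots\to g-1$ closed by the edge $\{g-1,0\}$. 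Therefore the girth of $G$ is $g$, and combining this with the lower bound gives $f(1,1,g)=2g-2$.

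I expect the only delicate point to be this girth verification: one must be careful that the modular bookkeeping rules out every short \emph{closed trail}, not merely every short simple cycle, since the definition of cycle in Section~\ref{sec:notation} permits repeated vertices. The displacement argument above is stated for arbitrary closed walks precisely so that it covers this case with no extra work.
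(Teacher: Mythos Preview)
Your proof is correct and takes essentially the same approach as the paper: the AHM bound for the lower bound and, for the upper bound, the cyclically oriented M\"obius ladder---which is exactly the graph you construct on $\mathbb{Z}_{2g-2}$ with arcs $i\to i+1$ and chords $\{i,i+g-1\}$. The paper's proof is a two-line sketch that omits both the explicit girth verification you supply and your alternative direct lower-bound argument, but the underlying objects are identical.
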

\begin{proof}
The lower bound follows from the AHM bound.  The AHM bound is achieved by
the M\"{o}bius ladder, where the outer cycle (see Figure~\ref{figmob}) is
cyclically oriented.
\end{proof}

\begin{figure}[ht]
\centering
\begin{tikzpicture}[scale=0.7]
\tikzstyle{vertex}=[draw=black, fill=yellow!50!white, thick, shape=circle, inner sep=0, minimum height=4.0]
    \foreach \rot in {0,45,...,315}
    {
        \draw[thick,decoration={markings,mark=at position 0.7 with
            {\arrow[scale=0.8,>=triangle 45]{>}}},postaction={decorate}] (\rot:2) -- (\rot+45:2);
    }
    \foreach \rot in {0,45,90,135}
    {
        \draw[line width=1.0,color=black] (\rot:2) -- (\rot+180:2);
    }
    \foreach \rot in {0,45,...,315}
    {
        \node[vertex] at (\rot:2) {};
    }
\end{tikzpicture}
\caption{The M\"{o}bius ladder of order $8$: the $(1,1,5)$-cage.  An AHM tree can be
found by using any directed path of length four and the three edges incident
with the non-endvertices of the path.}
\label{figmob}
\end{figure}
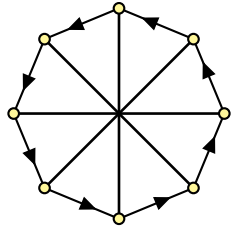

In \cite{ahm} the authors consider and resolve the case, $r=2, z=1$.

\begin{theorem}[AHM \cite{ahm}]
\begin{equation*}
f(2,1,g) =  \begin{cases}
\dfrac{g^2+1}{2} &, \text{  $g$ odd } \\
\dfrac{g^2}{2} &, \text{  $g$ even }
\end{cases}
\end{equation*}
\end{theorem}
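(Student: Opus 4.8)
The plan is to separately establish the lower bound and a matching construction. For the lower bound, I would first specialize the AHM bound to $r=2$. With $r=2$, the Moore bound degenerates: a $2$-regular graph of diameter $d$ has $n(2,d) = 2d+1$, since the extremal graph is just $C_{2d+1}$ (the ``Moore tree of degree $2$ and depth $i$'' is a path of length $2i$ rooted at its center). So $\sum_{i=0}^{g-1} n(2,\min(i,g-i-1)) = \sum_{i=0}^{g-1}(2\min(i,g-i-1)+1)$, and evaluating this geometric-style sum for $g$ odd and $g$ even respectively yields $(g^2+1)/2$ and $g^2/2$. This is a routine computation; the one subtlety is that the degenerate Moore trees for adjacent path vertices can overlap unless one is slightly careful, so I would instead re-derive the bound from scratch in the $r=2$ case rather than quote the general AHM theorem as a black box: take any $(2,1,g)$-graph $H$, pick a directed cycle (out-degree $1$ forces every vertex onto a directed cycle, and girth $\ge g$ forces its length $\ge g$), walk $g$ consecutive arcs $v_0,\dots,v_{g-1}$, and argue that the undirected $2$-regular structure attaches paths of length forced by the girth whose vertices must all be distinct, counting at least the claimed number.

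For the upper bound, I would exhibit an explicit $(2,1,g)$-graph of the stated order. The natural construction mirrors the AHM tree picture: start with a directed cycle on $g$ vertices $v_0,\dots,v_{g-1}$ (arcs $v_i \to v_{i+1 \bmod g}$), and then hang an undirected path off each $v_i$ so that the undirected edges at each $v_i$ are used up; the path lengths are chosen to be as long as the girth permits (length $\min(i, g-1-i)$ on one side, appropriately, so that the two undirected neighbors of $v_i$ along the directed cycle close up). Concretely one wants the undirected edges to form a single long cycle or a union of paths whose endpoints are identified in pairs so that every vertex has undirected degree exactly $2$, total vertex count $(g^2+1)/2$ or $g^2/2$, and no cycle shorter than $g$ is created — in particular no short ``mixed'' cycle alternating a few arcs with a short undirected detour. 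I expect the cleanest description is a circulant-like graph on $\mathbb{Z}_n$ with $n = f(2,1,g)$: arcs $i \to i+1$ and edges $\{i, i+s\}$ for a suitable step $s$ (roughly $s \approx g$), chosen so the shortest cycle using $a$ arcs and $b$ edges has $a + b \ge g$; this is exactly the kind of voltage-graph / circulant analysis the paper's later sections use.

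The main obstacle will be verifying the girth of the construction — ruling out every short mixed cycle. A cycle in a mixed graph may use arcs (all traversed forward) and edges (traversable either way), so I must check that for any nonnegative integers $a$ (arcs) and $b$ (edges) with $a + b < g$, there is no closed walk of that type. In the $\mathbb{Z}_n$ circulant model this reduces to a number-theoretic condition: $a\cdot 1 + (\text{sum of }\pm s \text{ over the } b \text{ edges}) \not\equiv 0 \pmod n$ whenever $a+b < g$ and the walk is a genuine cycle (no repeated arc/edge); choosing $s$ and $n$ to make this work, and confirming it is tight, is the delicate part. Once the girth is pinned down, matching the vertex count to the lower bound is immediate, so the two bounds coincide and the theorem follows. (Since this theorem is attributed to \cite{ahm}, I would present this as a streamlined reconstruction of their argument rather than claim novelty.)
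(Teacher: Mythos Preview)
Your plan is essentially the same as what the paper records. The paper does not prove this theorem itself; it cites \cite{ahm} and summarizes the argument in two sentences: the lower bound is the AHM bound specialized to $r=2$ (exactly your computation with $n(2,d)=2d+1$), and the extremal graphs are obtained from the Behzad--Chartrand--Wall digraphs by replacing one directed spanning cycle with an undirected cycle. That construction is precisely your circulant on $\mathbb{Z}_n$ with arcs $i\to i+1$ and edges $\{i,i+s\}$, so your ``cleanest description'' lands on the intended object.

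One point to tighten: your first attempted construction --- a directed cycle on only $g$ vertices $v_0,\dots,v_{g-1}$ with undirected paths hung off --- is not a $(2,1,g)$-graph as stated, because the roughly $g^2/2-g$ new vertices on those paths would have out-degree $0$. Every vertex must lie on a directed cycle of length at least $g$, so the directed subgraph alone already has $\sim g^2/2$ vertices; you cannot add vertices via undirected appendages. You seem to sense this and pivot to the circulant, which is the right move; just drop the first description entirely. For the girth verification in the circulant model, your reduction to $a + cs \equiv 0 \pmod n$ with $a+|c|\le a+b<g$ is the correct framework, and choosing $s$ as in the BCW construction makes the check short.
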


The extremal graphs in \cite{ahm} are derived from the Behzad, Chartrand and Wall digraphs by
replacing one directed spanning cycle by an undirected cycle.
The lower bound follows from the AHM bound, giving the first examples where the bound
is achieved.
 
The value of $f(1,2,g)$ can be determined using the $r=2$ case of
Behzad-Chartrand-Wall Conjecture, proved by Behzad, and by the graph $\vec{C}_g[K_2]$.

\begin{theorem}
\begin{equation*}
f(1,2,g) =  2g
\end{equation*}
\end{theorem}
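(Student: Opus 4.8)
The plan is to establish the two inequalities separately: the lower bound $f(1,2,g)\ge 2g$ by combining the $r=2$ case of the Behzad--Chartrand--Wall bound with a parity observation, and the matching upper bound by verifying that $\vec{C}_g[K_2]$ is an explicit $(1,2,g)$-graph on $2g$ vertices.

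For the lower bound, let $G$ be any $(1,2,g)$-graph on $n$ vertices. Since every vertex has undirected degree exactly $1$, the undirected edges of $G$ form a perfect matching; in particular $n$ is even. Now let $D$ be the digraph obtained from $G$ by deleting all of its undirected edges. Every vertex of $D$ has out-degree $2$, and any directed cycle of $D$ is in particular a cycle of $G$, so $D$ has directed girth at least $g$. By the $r=2$ case of the Behzad--Chartrand--Wall bound (this is the known $d=2$ case of the Caccetta--H\"{a}ggkvist inequality), a digraph with out-degree $2$ and directed girth at least $g$ has at least $2(g-1)+1=2g-1$ vertices, so $n\ge 2g-1$; since $n$ is even, $n\ge 2g$.

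For the upper bound I would check that $\vec{C}_g[K_2]$ is a $(1,2,g)$-graph of order $2g$. Writing its vertices as pairs $(i,j)$ with $i\in\mathbb{Z}_g$ and $j\in\{0,1\}$, the undirected edges are exactly the ``fibers'' $\{(i,0),(i,1)\}$, so the undirected degree is $1$, while the arcs leaving $(i,j)$ are $(i,j)\to(i{+}1,0)$ and $(i,j)\to(i{+}1,1)$, so the out-degree is $2$. For the girth, the key point is that every arc increases the $\mathbb{Z}_g$-coordinate by $1$ whereas every undirected edge leaves it unchanged; hence the number of arcs used in any cycle is a multiple of $g$, and it cannot be $0$ because a cycle using no arcs would be confined to a single fiber and would have to repeat that fiber's unique edge. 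Thus every cycle has length at least $g$, and the directed $g$-cycle through one vertex of each fiber shows the girth equals $g$. Hence $f(1,2,g)\le 2g$, and together with the lower bound this gives equality.

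The step I expect to be most delicate is the appeal to the directed lower bound: I need it in the form ``\emph{any} digraph with out-degree $2$ and directed girth at least $g$ has at least $2g-1$ vertices'', i.e. assuming only out-regularity, since a minimum-order $(1,2,g)$-graph need not have constant in-degree and so its arc-digraph need not be totally $2$-regular. Everything else is routine verification.
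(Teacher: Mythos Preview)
Your proof is correct and follows essentially the same approach as the paper: the lower bound via Behzad's $r=2$ case of the Behzad--Chartrand--Wall bound combined with the parity observation, and the upper bound via $\vec{C}_g[K_2]$. You supply more detail than the paper does---particularly in verifying the girth of the construction and in flagging the (legitimate) subtlety that the directed lower bound must be invoked for digraphs that are only out-regular, not necessarily totally regular.
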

\begin{proof}
Let $G$ be a $(1,2,g)$-graph.
Consider the directed subgraph.  The result of Behzad \cite{behzad} on the case $r=2$ of the
Behzad-Chartrand-Wall Conjecture implies that $G$
must have at least $2g-1$ vertices.  But since $r$ is odd, the order of $G$ must be even.
So $f(1,2,g) \geq  2g$.
The extremal graph is $\vec{C}_g[K_2]$.  The graph $\vec{C}_7[K_2]$ is shown in
Figure~\ref{g127fig}.
\end{proof}

\medskip

\begin{figure}[htbp]
\centering
\setlength{\abovecaptionskip}{25pt}
\tikzset{fontscale/.style = {font=\relsize{#1}}}
\begin{tikzpicture}[scale=1.0,line width=1pt]

\tikzstyle{fred}=[draw=black, fill=yellow, thick,
  shape=circle, minimum height=3mm, inner sep=1, text=black, font=\bfseries];
\tikzstyle{arc}=[thick,
                 decoration={markings,mark=at position 0.85 with {\arrow[scale=1.0,>=triangle 45]{>}}},
                 postaction={decorate}];

\newcommand\orad{3.5}
\newcommand\mrad{2.0}
\newcommand\rotb{51.4286}

\begin{scope}[rotate=90]

\foreach \rot in {0,1,2,...,6}
{
    \begin{scope}[rotate=\rot*\rotb]
        \draw[line width=1.5,brown!60!black] (0:\mrad) -- (0:\orad);
        \draw[arc] (\rotb:\orad) -- (0:\orad);
        \draw[arc] (\rotb:\orad) -- (0:\mrad);
        \draw[arc] (\rotb:\mrad) -- (0:\orad);
        \draw[arc] (\rotb:\mrad) -- (0:\mrad);
    \end{scope}
}

\foreach \rot in {0,1,2,...,6}
{
    \begin{scope}[rotate=\rot*\rotb]
        \node[fred,fill=yellow!50!white] at (0:\orad) [fontscale=2] {};
        \node[fred,fill=yellow!50!white] at (0:\mrad) [fontscale=2] {};
    \end{scope}
}

\end{scope}

\end{tikzpicture}
\caption{The graph $\vec{C}_7[K_2]$ of order $14$ with $(r,z,g) = (1,2,7)$.}
\label{g127fig}
\end{figure}

\section{The Case $r=2$}
\label{sec:caser2}

In this section we present three theorems that provide upper bounds for $f(2,z,g)$.
When $r=2$, a simple upper bound can be obtained using subgraphs of $\vec{C}_g[C_g]$.
\cite{bcw,ahm}.

\begin{theorem}
\label{thm2zga}
\begin{equation*}
f(2,z,g) \leq g^2, \mbox{ for } z \leq g.
\end{equation*}
\end{theorem}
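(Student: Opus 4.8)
The plan is to construct an explicit $(2,z,g)$-graph on $g^2$ vertices as a spanning subgraph of the composition $\vec{C}_g[C_g]$, whose vertex set we write as $\{(i,j) : i,j \in \mathbb{Z}_g\}$ with both coordinates read modulo $g$. Following the idea from \cite{bcw,ahm}, the undirected edges are kept ``within layers'': we take the edge $(i,j)(i,j{+}1)$ for every $i,j$, so each layer $\{i\}\times\mathbb{Z}_g$ carries an undirected $g$-cycle and every vertex has degree exactly $2$. For the arcs we must select, out of the full composition, exactly $z$ out-arcs per vertex going from layer $i$ to layer $i{+}1$. A clean choice is to send $(i,j)$ to $(i{+}1,\, j{+}t)$ for $t$ in a fixed set $S\subseteq\mathbb{Z}_g$ of size $z$; taking $S=\{0,1,\dots,z-1\}$ works, and this is where the hypothesis $z\le g$ is used, so that $|S|=z$ is achievable and the out-neighbors are distinct. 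This yields a totally regular mixed graph with $r=2$, out-degree $z$, and order $g^2$; it remains to check the girth is exactly $g$.

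The key step is the girth computation. First I would argue that any cycle using at least one arc has length at least $g$: each arc advances the first coordinate by exactly $1$ mod $g$, each edge leaves the first coordinate fixed, and there are no arcs decreasing the first coordinate, so to return to the starting first coordinate a cycle that uses $k\ge 1$ arcs must use a multiple of $g$ of them, forcing length $\ge g$. Second, a cycle using no arcs lies entirely in a single layer, hence is the undirected $g$-cycle of that layer (an undirected $g$-cycle has no shorter sub-cycle), giving length $g$. Combining, the girth is $\ge g$, and the layer cycle shows it is exactly $g$. One should also dispatch the degenerate short closed walks the mixed-graph cycle definition formally allows ($1$-cycles and $2$-cycles): there are no loops, no repeated edges/arcs, and there is no pair of antiparallel arcs or an arc plus a parallel edge, since all arcs go strictly ``forward'' between consecutive distinct layers while edges stay inside a layer — so no $2$-cycle exists either, provided $g\ge 3$, which holds since we assume $g\ge 5$.

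The main obstacle is making sure the arc set is genuinely a subset of $\vec{C}_g[C_g]$ in the sense the sentence before the theorem intends, and that the girth argument is airtight about \emph{mixed} cycles that alternate arcs and edges in complicated ways; the first-coordinate projection argument handles exactly this, since it is insensitive to how arcs and edges are interleaved — only the count of arcs matters modulo $g$. A minor point to be careful about is whether the out-neighbors $(i{+}1,j),(i{+}1,j{+}1),\dots,(i{+}1,j{+}z{-}1)$ could coincide with each other (they do not, as $z\le g$) or whether an arc could coincide with the reverse of an edge (it cannot, different layers). Once these checks are in place the bound $f(2,z,g)\le g^2$ follows immediately, since we have exhibited an $(2,z,g)$-graph of that order.
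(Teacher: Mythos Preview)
Your construction is exactly the one the paper gives (same vertex set, same edges $v_{i,j}v_{i,j+1}$, same arcs $v_{i,j}v_{i+1,j+k}$ for $0\le k<z$), and your first-coordinate projection argument for the girth is correct; the paper in fact omits the girth verification entirely and just states the construction. So your proposal matches the paper's approach and, if anything, is more complete.
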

\begin{proof}
The required graphs are all subgraphs of $\vec{C}_g[C_g]$.
Construct a $(2,z,g)$-graph $G$ as follows.
The vertices of $G$ are $\{ v_{i,j} \,|\, 0 \leq i,j < g \}$,
with edges $v_{i,j}v_{i,j+1}$, 
and arcs $v_{i,j}v_{i+1,j+k}$ for $0 \leq k < z$ (subscript arithmetic is modulo $g$).
\end{proof}

For a restricted range of $z$, we can remove one of the $g$-cycles and still maintain the girth.

\begin{theorem}
\label{thm2zgb}
\begin{equation*}
f(2,z,g) =  \begin{cases}
g^2-g, & \text{ $g$ even and $z \leq g/2$} \\[10pt]
g^2-1, & \text{ $g$ odd and $z \leq (g{+}1)/2$}
\end{cases}
\end{equation*}
\end{theorem}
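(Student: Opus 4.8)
The plan is to establish matching upper and lower bounds.

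For the upper bound, I would exhibit, for each admissible $z$, an explicit $(2,z,g)$-graph of the stated order, the natural source being the graph $\vec{C}_g[C_g]$ of Theorem~\ref{thm2zga}: it already attains order $g^2$ and has all of its undirected cycles of length exactly $g$. The idea is to delete vertices while re-wiring so that the girth is preserved. For $g$ even one deletes one entire undirected $g$-cycle (that is, $g$ vertices --- ``removing one of the $g$-cycles''); for $g$ odd one deletes a single vertex, repairing the one affected undirected cycle appropriately. Since the surviving arcs still flow cyclically from each undirected cycle to the next, every directed and every ``forward'' mixed closed walk still has length at least $g$; the only way a short cycle could appear is from a re-routed arc running parallel, or nearly parallel, to an undirected edge, and the content of the hypotheses $z\le g/2$ (resp.\ $z\le (g+1)/2$) is precisely that there is enough freedom among the $g-1$ positions per cycle to choose all the re-routing shifts so that this never happens. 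I would verify the girth by cataloguing short closed walks according to how many arcs they use and how far they backtrack among the undirected cycles.

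For the lower bound I would generalize the AHM argument, which covers only $z=1$; note first that we may assume $z\ge 2$, since $z=1$ is the preceding theorem and gives the strictly smaller value $f(2,1,g)$. Because the directed girth is at least $g$, any directed walk of length $g-1$ is a path, so $G$ contains a directed path $v_0\to v_1\to\cdots\to v_{g-1}$, and since $z\ge 2$ each $v_i$ carries a second out-arc, say to a vertex $u_i$. Attach to each $v_i$ its undirected ball of radius $\min(i,\,g-1-i)$ --- a path on $2\min(i,g-1-i)+1$ vertices, the undirected part being $2$-regular --- and attach to each $u_i$ a similar undirected ball with its radius reduced by one to account for the extra arc-step. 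Using girth $g$ one shows that all the $v_i$ and all the $u_i$ are distinct and that all of these balls are pairwise vertex-disjoint; the first family then contributes $g^2/2$ vertices and the second contributes $g^2/2-g$ when $g$ is even (respectively $(g^2+1)/2$ and $(g^2-3)/2$ when $g$ is odd), yielding the two claimed totals. The parity split in the theorem is exactly the parity split in these two Moore-type sums.

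I expect the main obstacle to be the girth verification for the re-wired upper-bound graph: there are several families of potentially short closed walks to eliminate, and it is in this bookkeeping --- with the modular arithmetic on the shifts --- that the bounds on $z$ are genuinely used, which is also why the admissible range of $z$ differs in the two parities. On the lower-bound side the delicate point is the disjointness of the second family of balls: unlike the first family, a ball around $u_i$ is not automatically forced away from a ball around $u_j$ or $v_j$ by a cycle through the directed path, so one must either argue carefully about which radii are simultaneously admissible or replace the directed path by $g$ consecutive vertices of a shortest directed cycle; arranging the radii to be maximal, consistent over all pairs, and to sum to exactly $g^2-g$ (resp.\ $g^2-1$) is the heart of that half of the proof.
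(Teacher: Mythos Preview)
Your upper-bound plan matches the paper's: the paper likewise removes one undirected $g$-cycle from the $\vec{C}_g[C_g]$ construction (it writes out the arc set explicitly for even $g$, and in fact gives nothing at all for the odd case or for the girth verification).

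The real problem is the lower bound. The statement as printed, with ``$=$'', is not what the paper actually proves and cannot be what it intends: the very next theorem in the paper shows $f(2,2,g)\le \lceil g/2\rceil\lfloor 3g/2\rfloor$, and the summary table records the exact value $f(2,2,6)=27$, whereas your reading would force $f(2,2,6)=6^{2}-6=30$. So the equality is a misprint for an inequality ``$\le$'', and the paper's own proof supplies only the construction, no lower bound.

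Consequently your AHM-style lower-bound argument is aiming at a false target, and the place it breaks is exactly the ``delicate point'' you already flag: the disjointness of the second family of balls (around the $u_i$) cannot be established from girth alone. Nothing in girth~$g$ prevents $u_i$ from lying on the undirected ball of some $v_j$, or two of the $u$-balls from overlapping; the $27$-vertex $(2,2,6)$-graph is a concrete witness that such overlaps do occur. (Even at the bookkeeping level there is a warning sign: for $i=0$ and $i=g-1$ the ``radius reduced by one'' is negative.) Drop the lower-bound half and treat the theorem as the upper bound it is.
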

\begin{proof}
If $g$ is even, let $g=2h$, and then $z \leq h$.
Construct a $(2,z,g)$-graph $G$ of order $g^2 - g$ as follows.
Let $V(G) = \{ v_{i,j} \,|\, 0 \leq i < g-1 \text{ and } 0 \leq j < g \}$,
$E(G) = \{v_{i,j}v_{i,j+1} | \, 0 \leq j < g-1 \text{ and } 0 \leq j < g \}$.
The arc set is a bit more complex:
\begin{equation*}
\begin{aligned}
A(G) = & \{ v_{i,j} v_{i+1,(j+k)\,mod\,h} |\, 0 \leq i < g{-}2, \; 0 \leq j < h, \, 0 \leq k < z \} \; \cup \\
       & \{ v_{i,h+j} v_{i+1,h+(j+k)\,mod\,h} |\, 0 \leq i < g{-}2, \; 0 \leq j < h, \, 0 \leq k < z \} \; \cup \\
       & \{ v_{g-2,j} v_{0,h+(j+k)\,mod\,h} |\, 0 \leq j < h, \; 0 \leq k < z \} \; \cup \\
       & \{ v_{g-2,j+h} v_{0,(j+k)\,mod\,h} |\, 0 \leq j < h, \; 0 \leq k < z \}.
\end{aligned}
\end{equation*}

\end{proof}

A small example of the construction in Theorem~\ref{thm2zgb} is shown in Figure~\ref{mobfig}.

\begin{figure}[htbp]
\centering
\setlength{\abovecaptionskip}{25pt}
\tikzset{fontscale/.style = {font=\relsize{#1}}}
\begin{tikzpicture}[scale=0.8,line width=1pt]

\tikzstyle{fred}=[draw=black, fill=yellow, thick,
  shape=circle, minimum height=0.25cm, inner sep=1, text=black, font=\bfseries];
\tikzstyle{arc}=[thick,
                 decoration={markings,mark=at position 0.70 with {\arrow[scale=0.7,>=triangle 45]{>}}},
                 postaction={decorate}];

\begin{scope}[rotate=90]

%% -- arcs

    \foreach \rot in {0,1,3,4}
    {
        \begin{scope}[rotate=\rot*72]
            \draw[arc] (72:2) -- (0:2);
            \draw[arc] (72:2) -- (0:3);
            \draw[arc] (72:3) -- (0:3);
            \draw[arc] (72:3) -- (0:4);
            \draw[arc] (72:4) -- (0:4);
            \draw[arc] (72:4) -- (0:2);
            \draw[arc] (72:5) -- (0:5);
            \draw[arc] (72:5) -- (0:6);
            \draw[arc] (72:6) -- (0:6);
            \draw[arc] (72:6) -- (0:7);
            \draw[arc] (72:7) -- (0:7);
            \draw[arc] (72:7) -- (0:5);
        \end{scope}
    }
    \begin{scope}[rotate=144]
            \draw[arc] (72:2) -- (0:5);
            \draw[arc] (72:2) -- (0:6);
            \draw[arc] (72:3) -- (0:6);
            \draw[arc] (72:3) -- (0:7);
            \draw[arc] (72:4) -- (0:7);
            \draw[arc] (72:4) -- (0:5);
            \draw[arc] (72:5) -- (0:2);
            \draw[arc] (72:5) -- (0:3);
            \draw[arc] (72:6) -- (0:3);
            \draw[arc] (72:6) -- (0:4);
            \draw[arc] (72:7) -- (0:4);
            \draw[arc] (72:7) -- (0:2);
    \end{scope}

%% -- edges

    \foreach \rot in {0,1,2,3,4}
    {
        \begin{scope}[rotate=\rot*72]
            \draw[line width=2.0] (0:2) -- (0:7);
            \draw[line width=2.0] (0:2) to [out=330,in=210] (0:7);
        \end{scope}
    }

%% -- vertices

    \foreach \rot in {0,1,2,...,4}
    {
        \begin{scope}[rotate=\rot*72]
            \foreach \rad/\c in {2/red,3/red,4/red,5/blue,6/blue,7/blue}
            {
                \node[fred,fill=\c] at (0:\rad) {};
            }
        \end{scope}
    }

\end{scope}
\end{tikzpicture}
\caption{A $(2,2,6)$-graph with $5$ undirected $6$-cycles.}
\label{mobfig}
\end{figure}

Finally, when $r = z = 2$ we reduce the coefficient of $g^2$ in the upper bound to $3/4$.

\begin{theorem}
\begin{equation*}
f(2,2,g) \leq \bigg \lceil \frac{g}{2} \bigg \rceil \bigg \lfloor \frac{3g}{2} \bigg \rfloor = \left\{
\begin{array}{ll} \frac{3g^2}{4} & g \mbox{ even, } \\[10pt]
                          \frac{3g^2+2g-1}{4} & g \mbox{ odd } 
\end{array}
\right.
\end{equation*}
\end{theorem}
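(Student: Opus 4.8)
The plan is to exhibit an explicit $(2,2,g)$-graph $G$ of order exactly $\lceil g/2\rceil\lfloor 3g/2\rfloor$; a lower bound is not needed since the statement only asserts $f(2,2,g)\le\lceil g/2\rceil\lfloor 3g/2\rfloor$. Put $a=\lceil g/2\rceil$ and $b=\lfloor 3g/2\rfloor$, take $V(G)=\mathbb{Z}_a\times\mathbb{Z}_b$, and let the undirected edges be the ``fibers'' $(i,j)(i,j+1)$, with the second coordinate read modulo $b$, so the undirected part of $G$ is a disjoint union of $a$ cycles of length $b$. The arcs advance the first coordinate by one modulo $a$ while shifting the second by $0$ or $1$, together with one global ``twist'' $s=\lfloor g/2\rfloor$ placed at the wrap-around: from $(i,j)$ with $i<a-1$ there are arcs to $(i+1,\,j)$ and $(i+1,\,j+1)$, and from $(a-1,j)$ there are arcs to $(0,\,j+s)$ and $(0,\,j+s+1)$. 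This is a twisted version of the composition-type constructions in Theorems~\ref{thm2zga} and~\ref{thm2zgb}, but with far fewer and much longer fibers; the governing identity is $a+b=\lceil g/2\rceil+\lfloor 3g/2\rfloor=2g$.

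Verifying that $G$ is a $(2,2,\cdot)$-graph of the right order is routine: $|V(G)|=ab$ is the claimed value, every vertex has undirected degree $2$ and out-degree $2$ by construction (in fact the in-degree is also $2$), and since $a\ge 3$ whenever $g\ge 5$ there are no loops and no $2$-cycles. The entire content of the theorem is therefore the claim that the girth of $G$ is exactly $g$.

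For the girth I would distinguish cases by how a shortest cycle $C$ splits between edges and arcs. If $C$ is purely undirected it lies in a single fiber, so $|C|\ge b\ge g$. Otherwise $C$ uses at least one arc; since each arc raises the first coordinate by $1$ modulo $a$, the number of arcs in $C$ is a positive multiple of $a$, and if it is at least $2a$ then $|C|\ge 2a\ge g$. The decisive case is when $C$ uses exactly $a$ arcs, one full ``loop''. Exactly one of those $a$ arcs is the twisted one, so the arcs contribute a net shift of $t+s$ in the second coordinate for some $t\in\{0,1,\dots,a\}$; as the total shift around $C$ is zero in $\mathbb{Z}_b$, the undirected edges of $C$ must realize a net displacement of $-(t+s)$, and therefore $C$ contains at least $\mathrm{dist}_{\mathbb{Z}_b}(0,\,t+s)$ edges. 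But $t+s$ ranges over $\{\lfloor g/2\rfloor,\dots,g\}$, and since $b-g=\lfloor g/2\rfloor$ this entire range lies at distance at least $\lfloor g/2\rfloor=g-a$ from $0$ in $\mathbb{Z}_b$; hence $|C|\ge a+(g-a)=g$. Finally, the loop that uses only the shift-$0$ arcs, closed up by $\lfloor g/2\rfloor$ undirected steps within a single fiber, is an honest cycle of length exactly $g$, so the girth is $g$ and $G$ witnesses the bound.

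The main obstacle is the mixed-cycle step and, inside it, the choice of the twist $s$: over one loop the arc-induced shifts occupy only a short interval of $\mathbb{Z}_b$, and $s$ must translate that interval clear of the $\lfloor g/2\rfloor$-neighborhood of $0$. The identity $\lceil g/2\rceil+\lfloor 3g/2\rfloor=2g$ shows that there is essentially a unique workable choice, $s=\lfloor g/2\rfloor$, and that it makes every inequality above tight; the remaining care is bookkeeping the floors and ceilings so the argument runs uniformly for $g$ even (where $b=3a$) and $g$ odd (where $b=3a-2$).
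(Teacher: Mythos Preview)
Your construction is exactly the one in the paper (with $a,b$ in place of the paper's $s,t$ and the same twist $\lfloor g/2\rfloor$ on the wrap-around), and your girth argument follows the same outline: pure-edge cycles have length $\ge b$, mixed cycles use a multiple of $a$ arcs, and in the one-loop case the accumulated second-coordinate shift forces at least $g-a$ edges. If anything, your displacement computation in $\mathbb{Z}_b$ is more explicit than the paper's rather terse ``by construction, this is impossible.''
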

\begin{proof}
Let $s = \Big \lceil \frac{g}{2} \Big \rceil$, and $t = \Big \lfloor \frac{3g}{2} \Big \rfloor$.
We describe a graph $G$ of order $st$ with $r=z=2$, and girth $g$.
Let
$$
V(G) = \{ v_{i,j} \,|\, 0 \leq i < s,  0 \leq j < t \}.
$$
The undirected subgraph of $G$ consists of $s$ disjoint $t$-cycles,
where the first subscript of a vertex indicates
which cycle the vertex is on, and the second subscript indicates its position on the cycle.
We imagine the cycles are numbered according to the first coordinate their vertices, and refer to
cycle $0$, cycle $1$, on up to cycle $s-1$, with the vertices of each cycle numbered in the
natural order.

Next we specify the arcs so that all arcs from vertices on cycle $i$ go to vertices on cycle $i+1$.
So
$v_{i_1,j_1} v_{i_2,j_2} \in A(G)$ if $i_1 + 1 = i_2 < s$ and either $j_1 = j_2$ or $j_1 + 1 = j_2$.

Finally the arcs from cycle $s{-}1$ back to cycle $0$:
$v_{s-1,j_1} v_{0,j_2}$ is an arc if
$j_1 + \Big \lfloor g/2 \Big \rfloor = j_2$ or
$j_1 + \Big \lfloor g/2 \Big \rfloor + 1 = j_2$.

Note that the function that maps $v_{i,j}$ to $v_{i,j+1}$ is a graph automorphism.

Any possible cycle of length less than $g$ must contain arcs, since the undirected subgraph consists
of disjoint $t$-cycles.  But any cycle containing an arc contains at least $t$ arcs and at least one
arc joining each pair of consecutive $t$-cycles.  Hence it also contains at least one
vertex from each of the undirected cycles.
So by symmetry, if there is a cycle of length less than $g$ there is such a cycle containing vertex $0$.
But any such cycle must contain
exactly $r$ arcs and fewer than $g-r = \lceil \frac{g}{2} \rceil$ edges.
By construction, this is impossible.
\end{proof}

Note that if this construction were modified for $f(2,3,g)$ we would need undirected
cycles of length $2g$ and would
no longer have an improvement over the upper bound in Theorem~\ref{thm2zga}.

For the case $(2,2,5)$, a graph of order $19$ was found by
Claudia De La Cruz and Miguel Piza\~{n}a (personal communication).
Their graph can viewed as a mixed orientation of the cubic residue graph.
The graph in Figure~\ref{fig22g} is a smallest $(2,2,6)$-graph.

\begin{figure}[htbp]
\centering
\setlength{\abovecaptionskip}{10pt}
\begin{tikzpicture}[scale=0.75,line width=1pt]
\tikzstyle{fred}=[draw=black, fill=yellow, very thick,
  shape=circle, minimum height=0.3cm, inner sep=0, text=black]
\tikzstyle{arc}=[very thick,
                 decoration={markings,mark=at position 0.70 with {\arrow[scale=0.8,>=triangle 45]{>}}},
                 postaction={decorate}];
\tikzstyle{redarc}=[very thick,red,
                 decoration={markings,mark=at position 0.50 with {\arrow[scale=1.0,>=triangle 45]{>}}},
                 postaction={decorate}];

\newcommand\orad{5}
\newcommand\irad{3}
\newcommand\rota{12.857143}
\newcommand\rotb{25.7143}
\newcommand\rotc{38.57142}

\begin{scope}[rotate=90]
\foreach \rot in {0,1,2,...,9}
{
    \begin{scope}[rotate=\rot*40]
        \draw[line width=1.0,black] (0:2) -- (40:2);
        \draw[line width=1.0,black] (0:4) -- (40:4);
        \draw[line width=1.0,black] (-20:3) -- (20:3);
        \draw[arc] (0:4) -- (20:3);
        \draw[arc] (0:4) -- (-20:3);
        \draw[arc] (20:3) -- (0:2);
        \draw[arc] (-20:3) -- (0:2);
    \end{scope}
}

\draw[redarc] (0:2) to [out=135,in=-15] (160:4);
\draw[redarc] (0:2) to [out=225,in=15] (200:4);

\foreach \rot in {0,1,2,...,8}
{
    \begin{scope}[rotate=\rot*40]
        \node[fred,fill=green] at (0:2) {};
        \node[fred,fill=red] at (20:3) {};
        \node[fred,fill=blue] at (0:4) {};
    \end{scope}
}
\end{scope}
\end{tikzpicture}
\caption{A $(2,2,6)$-graph of order $27$.  Only one pair
of arcs from the green cycle to the blue cycle are shown. The others are obtained by rotation.}
\label{fig22g}
\end{figure}
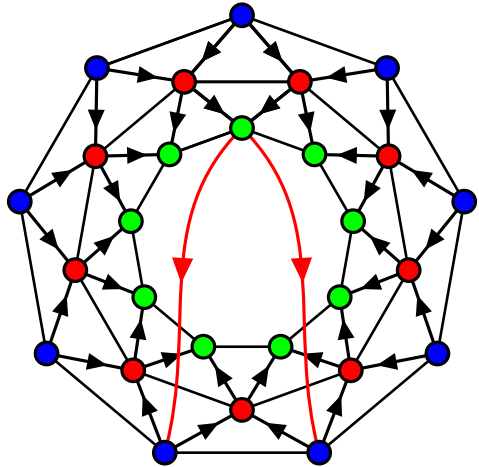

\section{The Case $z=1$}
\label{sec:casez1}

As noted in \cite{ahm}, the cases where $z=1$ may result in graphs most similar to undirected
cages.  In these cases the directed subgraph consists of a set of disjoint cycles whose lengths
partition the order of the graph.

The case $(3,1,5)$ was considered in
\cite{ahm} where they established the bounds $20 \leq f(3,1,5) \leq 28$.
We settle this case below.

\begin{theorem}
f(3,1,5) = 24
\end{theorem}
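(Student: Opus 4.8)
The plan is to establish $f(3,1,5)=24$ by proving matching lower and upper bounds. For the lower bound, the AHM bound gives only $f(3,1,5)\ge n(3,0)+n(3,1)+n(3,1)+n(3,1)+n(3,0)$, which with $n(3,0)=1$ and $n(3,1)=4$ yields $1+4+4+4+1=14$, far too weak, and even the improved AHM summation used elsewhere in the paper tops out near $20$. So the lower bound of $24$ must be obtained by a direct structural argument. First I would fix a hypothetical $(3,1,5)$-graph $G$ with $n=|V(G)|\le 23$ and exploit the fact that, since $z=1$, the arc set is a disjoint union of directed cycles, each of length at least $5$ (there are no directed $2,3,4$-cycles, nor mixed short cycles using arcs). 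I would count walks/closed neighborhoods: each vertex $v$ has its undirected ball $B_2(v)$ of size up to $1+3+3\cdot2=10$ in the Moore tree, but girth $5$ on the undirected side forces the undirected subgraph to be a (possibly disconnected) graph of girth $\ge 5$; more to the point, I would look at the mixed distance-$2$ neighborhood. The key inequality I expect to need: for each vertex $v$, the set $N^{-}(v)\cup N^{+}(v)\cup N_{\text{undir}}(v)$ together with second neighbors reached by length-$2$ mixed paths must avoid creating any closed walk of length $<5$, and a careful double count of ordered length-$2$ mixed paths versus vertices yields $n\ge 24$. I would organize this as: (i) no arc lies on a $C_3$ or $C_4$ formed with edges; (ii) the out-arc from $v$ and the in-arc to $v$ go to/from vertices at undirected distance $\ge 3$ from $v$ and from each other's undirected neighborhoods; (iii) combine with the Moore-type count on the undirected $3$-regular subgraph restricted appropriately.

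For the upper bound I would exhibit an explicit $(3,1,5)$-graph on $24$ vertices. Given the paper's toolkit, the natural construction is a voltage graph (lift) construction: take a small base mixed graph and a group of order dividing $24$ — most likely $\mathbb{Z}_{24}$, $\mathbb{Z}_{12}\times\mathbb{Z}_2$, or $\mathbb{Z}_8\times\mathbb{Z}_3$ — assign voltages to one arc (generating a single directed $24$-cycle, or three directed $8$-cycles) and to the undirected edges, then verify girth $5$ by checking that no nontrivial "girth-closing" voltage sum of length $\le 4$ around a base closed walk equals the identity. Concretely, I would try $V=\mathbb{Z}_{24}$ with arcs $i\to i+1$ and undirected edges $i\sim i+a$, $i\sim i+b$, $i\sim i+c$ for carefully chosen $a,b,c$ (one of which makes a perfect matching, e.g. $c=12$), then check the finite list of forbidden differences: $C_3$ conditions like $a+b\ne 0$, $2a\ne b$, etc.; $C_4$ conditions; and the mixed conditions such as $1 \pm a \pm b \ne 0$ (length-$3$ mixed cycles with one arc), $1+1 \ne \pm a$ and $2+a \ne b$ type relations (length-$3$ with two arcs or length-$4$), and $\pm a \pm b \pm c$-type relations — a bounded case check. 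Since the AHM lower bound is not tight here (the theorem says $24$, not the AHM value), I would make clear in the write-up that the lower bound argument is the substantive new content.

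The main obstacle I expect is the lower bound $f(3,1,5)\ge 24$: pushing from the easy counting bound (around $20$, matching the prior work of \cite{ahm}) up to $24$ requires exploiting the interaction between the single directed cycle structure and the undirected girth-$5$ condition, and naive ball-counting does not suffice. I anticipate needing a refined argument: classify vertices by how the unique out-neighbor and in-neighbor sit relative to the undirected component structure, show that the "defect" vertices where arcs could help shrink the ball are themselves constrained, and perhaps argue that $n=20,21,22,23$ each lead to a contradiction by a short finite analysis (possibly computer-assisted, which the paper's style permits — cf. the author's remark that "This author has tested" various graphs). A secondary obstacle is simply finding the order-$24$ example: if no cyclic voltage assignment works I would broaden to noncyclic groups or to a direct ad hoc construction on $24$ labeled vertices, verified by the girth check. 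I would present the explicit vertex/edge/arc lists in the same format as Theorems~\ref{thm2zga} and \ref{thm2zgb}, accompanied by a figure analogous to Figure~\ref{fig22g}.
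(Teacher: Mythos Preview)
Your proposal has a genuine gap on the lower-bound side, and it differs from the paper on the upper-bound side.

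\textbf{Lower bound.} The paper does \emph{not} obtain $f(3,1,5)\ge 24$ by any structural or counting argument. It takes the AHM bound of $20$ as the starting point (your computation of $14$ is a slip: for $i=2$ one has $\min(2,g{-}1{-}2)=2$, so the middle term is $n(3,2)=10$, giving $1+4+10+4+1=20$), and then eliminates the two remaining candidate orders $20$ and $22$ by exhaustive computer search. Two independent programs were used: one enumerates the possible directed subgraphs (unions of directed cycles of length $\ge 5$ partitioning $n$) and searches for a compatible cubic undirected subgraph; the other enumerates all cubic graphs of girth $\ge 5$ on $n$ vertices (via \texttt{nauty}) and searches for a compatible arc set. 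Your plan to push from $20$ to $24$ by refined ball-counting and ``defect'' analysis is not carried out, and there is no indication that such an argument exists; this is precisely the step the paper resolves by computation, not by hand. Note also that because the undirected degree $r=3$ is odd, the order must be even, so only $n=20$ and $n=22$ need to be ruled out --- your list $20,21,22,23$ overlooks this parity constraint.

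\textbf{Upper bound.} The paper's order-$24$ example was also found by computer (the same partition-based search, run at $n=24$, produced $23$ graphs). The most symmetric one has directed subgraph a $6$-cycle together with an $18$-cycle, not a single directed $24$-cycle; so your proposed circulant on $\mathbb{Z}_{24}$ with arcs $i\to i+1$ would not reproduce the paper's example, though some circulant might still work. Incidentally, for undirected degree $3$ on $\mathbb{Z}_{24}$ the symmetric connection set must have size $3$, hence one involution ($12$) and one pair $\{a,-a\}$; your description with three independent generators $a,b,c$ does not quite fit. Searching for a voltage lift is a reasonable alternative route, but you should be prepared for the possibility that no single-cycle (Cayley/circulant) example exists and that a less homogeneous construction is required.
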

\begin{proof}
The AHM bound in this case is $20$, so one needs to show that
$(3,1,5)$-graphs of orders $20$ and $22$ do not exist.  This was done by
exhaustive computer searches, which made heavy use of the {\it nauty and Traces}
\cite{nauty} package.
The searches were completed by two different programs.

The first program breaks the task into cases based on the structure of
the (maximal) directed subgraph of a putative $(3,1,5)$-graph.
This subgraph must be a union of cycles.  The lengths of these
cycles give a partition of $20$ (respectively $22$) with no part less than $5$.
There are $13$ (resp. $18$) such partitions.  For each case, the program then
does an exhaustive search for an undirected subgraph.

The other program reverses the procedure, and divides the search into
cases according to the structure of the undirected subgraph.
This subgraph must be a cubic graph whose girth is at least $5$.
There are $5784$ such graphs of order $20$ and
$90940$ of order $22$ \cite{nauty}.
While this leaves more cases to consider than the
first program, the exhaustive search for each case is much faster.

After completing the searches for $n=20$ and $n=22$, and finding no graphs,
the first of these methods was applied to the case $n=24$, and $23$ different graphs
were generated.
The most symmetric of these graphs is depicted in Figure~\ref{g315fig}.
\end{proof}

%% --------------------

\begin{figure}[H]
\centering
\setlength{\abovecaptionskip}{20pt}
\tikzset{fontscale/.style = {font=\relsize{#1}}}
\begin{tikzpicture}[scale=1.0,line width=1pt]

\tikzstyle{fred}=[draw=black, fill=yellow, ultra thick,
  shape=circle, minimum height=0.3cm, inner sep=1, text=black, font=\bfseries];
\tikzstyle{arc}=[very thick,
                 decoration={markings,mark=at position 0.70 with {\arrow[scale=1.0,>=triangle 45]{>}}},
                 postaction={decorate}];

\newcommand\orad{4}
\newcommand\irad{1.5}

\foreach \theta in {0,20,40,...,340}
{
  \begin{scope}[rotate=\theta]
    \draw[arc] (0:\orad) -- (20:\orad);
    \draw[line width=1.50, black] (0:\orad) -- (80:\orad);
  \end{scope}
}
\foreach \theta in {0,60,120,...,300}
{
  \begin{scope}[rotate=\theta]
    \draw[arc] (0:\irad) -- (60:\irad);
  \end{scope}
}

\foreach \rot in {0,120,240}
{
    \begin{scope}[rotate=\rot]
        \foreach \v/\c in {0/red,1/blue,2/green,3/white,4/yellow,5/gray}
        {
        \begin{scope}[rotate=\v*20]
            \node[fred,fill=\c] at (0:\orad) [fontscale=2] {};
        \end{scope}
        }
    \end{scope}
}

\foreach \v/\c in {0/red,1/blue,2/green,3/white,4/yellow,5/gray}
{
\begin{scope}[rotate=\v*60]
    \node[fred,fill=\c] at (0:\irad) [fontscale=2] {};
\end{scope}
}

\end{tikzpicture}
\caption{A graph of order $24$ with $(r,z,g) = (3,1,5)$.  There are
edges joining inner vertices to all outer vertices of the same color.}
\label{g315fig}
\end{figure}

%% --------------------

\begin{figure}[H]
\centering
\setlength{\abovecaptionskip}{12pt}
\tikzset{fontscale/.style = {font=\relsize{#1}}}
\begin{tikzpicture}[scale=1.0,line width=1pt]

\tikzstyle{fred}=[draw=black, fill=yellow, very thick,
  shape=circle, minimum height=0.3cm, inner sep=1, text=black, font=\bfseries];
\tikzstyle{arc}=[very thick,
                 decoration={markings,mark=at position 0.70 with {\arrow[scale=1.0,>=triangle 45]{>}}},
                 postaction={decorate}];

\newcommand\orad{3}
\newcommand\irad{1}

\foreach \theta in {0,40,...,320}
{
  \begin{scope}[rotate=\theta]
    \draw[ultra thick, black] (0:\orad) -- (40:\orad);
  \end{scope}
}

\foreach \rot in {0,120,240}
{
    \begin{scope}[rotate=\rot]
        \foreach \v/\c in {0/red,1/yellow,2/green}
        {
        \begin{scope}[rotate=\v*40]
            \node[fred,fill=\c] at (0:\orad) [fontscale=2] {};
        \end{scope}
        }
    \end{scope}
}

\foreach \v/\c in {0/red,1/yellow,2/green}
{
\begin{scope}[rotate=\v*120]
    \node[fred,fill=\c] at (0:\irad) [fontscale=2] {};
\end{scope}
}

\end{tikzpicture}
\caption{One of the two cubic graphs of order $12$ with girth $5$.
Each of the three inner vertices is adjacent to the three
vertices on the $9$-cycle having the matching color.}
The undirected subgraph in Figure~\ref{g315fig} is isomorphic to two
copies of this graph.
\label{g315sub}
\end{figure}

The undirected subgraph of the graph in Figure~\ref{g315fig} is isomorphic to two
copies of the cubic graph of girth $5$ and order $12$ shown in Figure~\ref{g315sub}.
An alternate view of the $(3,1,5)$-graph is given in Figure~\ref{alt315}.
In this graph we fix an ordering of the colors {\em red, blue, yellow, white, green, gray},
and refer to
{\em red} as color 0, {\em blue} as color 1, etc.  The directed edges of the graph are given
as follows.  For each of the three central vertices (in both subgraphs), we add an arc from
the vertex in color $i$ to the vertex (in the other subgraph) in color $i+1 \pmod{6}$.
This gives the directed $6$-cycle from Figure~\ref{g315fig}.

Next pick any red (color $0$) vertex on the outer $9$-cycle of the left subgraph, call it $v_0$.
Then pick any blue (color $1$) vertex on the outer $9$-cycle of the right subgraph, call it $v_1$.
Add an arc from $v_0$ to $v_1$.  Next let $v_2$ be the yellow (color $2$) vertex in the left
subgraph adjacent to $v_0$.  Add an arc from $v_1$ to $v_2$.
Then $v_3$ is the white (color $3$) vertex in the right subgraph adjacent to $v_1$, and add an
arc from $v_2$ to $v_3$.
Continue in this manner to
complete the directed $18$ cycle as in Figure~\ref{g315fig}.

\begin{figure}[H]
\centering
\setlength{\abovecaptionskip}{12pt}
\begin{tikzpicture}[scale=1.0,line width=1pt]
\tikzstyle{fred}=[draw=black, fill=yellow, ultra thick,
  shape=circle, minimum height=4mm, inner sep=0, text=black];
\tikzstyle{edge}=[draw=black, thick];
\tikzstyle{arc}=[ultra thick,
                 decoration={markings,mark=at position 0.70 with {\arrow[scale=0.7,>=triangle 45]{>}}},
                 postaction={decorate}];

\newcommand\orad{3.0}
\newcommand\irad{1.0}

\foreach \theta in {0,40,...,320}
{
  \begin{scope}[rotate=\theta]
    \draw[fred] (0:\orad) -- (40:\orad);
  \end{scope}
}

\foreach \rot in {0,120,240}
{
    \begin{scope}[rotate=\rot]
        \foreach \v/\c in {0/red,1/yellow,2/green}
        {
        \begin{scope}[rotate=\v*40]
            \node[fred,fill=\c] at (0:\orad) {};
        \end{scope}
        }
    \end{scope}
}

\foreach \v/\c in {0/red,1/yellow,2/green}
{
\begin{scope}[rotate=\v*120]
    \node[fred,fill=\c] at (0:\irad) {};
\end{scope}
}
\begin{scope}[xshift=8cm]
\foreach \theta in {0,40,...,320}
{
  \begin{scope}[rotate=\theta]
    \draw[fred] (0:\orad) -- (40:\orad);
  \end{scope}
}

\foreach \rot in {0,120,240}
{
    \begin{scope}[rotate=\rot]
        \foreach \v/\c in {0/gray,1/blue,2/white}
        {
        \begin{scope}[rotate=\v*40]
            \node[fred,fill=\c] at (0:\orad) {};
        \end{scope}
        }
    \end{scope}
}

\foreach \v/\c in {0/gray,1/blue,2/white}
{
\begin{scope}[rotate=\v*120]
    \node[fred,fill=\c] at (0:\irad) {};
\end{scope}
}
\end{scope}
\end{tikzpicture}
\caption{An alternate view of the $(3,1,5)$-graph.}
\label{alt315}
\end{figure}
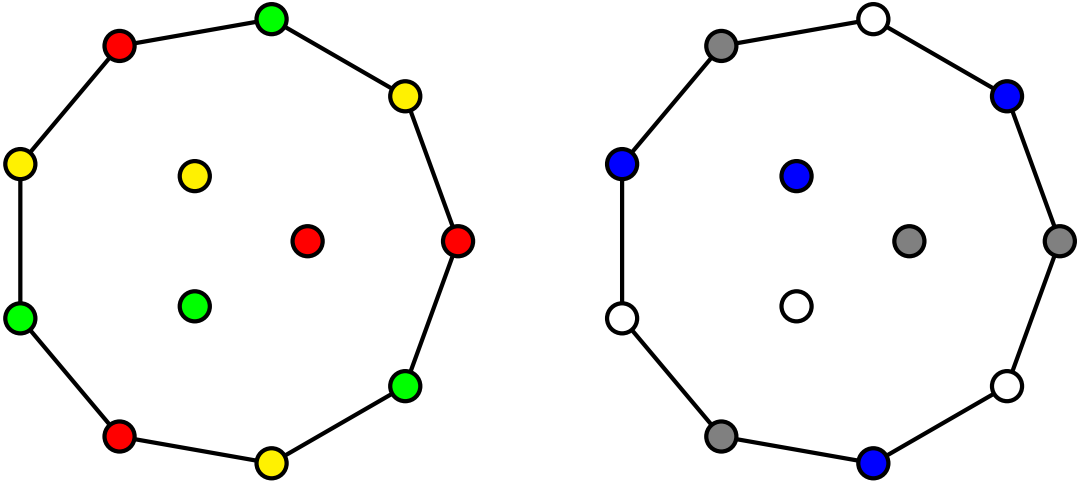

The next construction was presented in \cite{ref316} and shows that the AHM bound can be achieved
in at least one case where $r > 2$.

\begin{theorem}
\label{thm316}
$f(3,1,6) = 30$
\end{theorem}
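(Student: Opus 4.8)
The lower bound is immediate from the AHM bound. At $r=3$, $g=6$ the relevant Moore numbers are $n(3,0)=1$, $n(3,1)=4$ and $n(3,2)=10$, and for $i=0,\dots,5$ the exponent $\min(i,g-1-i)$ runs through $0,1,2,2,1,0$, so
\[
f(3,1,6)\ \ge\ 1+4+10+10+4+1\ =\ 30 .
\]
The content of the theorem is thus to exhibit a $(3,1,6)$-graph of order exactly $30$.

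Before constructing one I would record the constraints that tightness forces, since they guide the search. In any $(3,1,6)$-graph an arc $u\to v$ must satisfy $d(u,v)\ge 5$ in the undirected subgraph --- otherwise the arc, followed by a shortest undirected $v$--$u$ path, closes a mixed cycle of length at most $5$ --- so the undirected subgraph is a cubic graph of girth at least $6$ and diameter at least $5$ on $30$ vertices, with the arcs running between undirectedly distant vertices. Moreover, since the AHM bound is attained with equality, the local structure around any directed path of length $g-1=5$ must be exactly the extremal AHM configuration: vertex-disjoint Moore trees of depths $0,1,2,2,1,0$ that together exhaust all $30$ vertices, so the directed cycles are forced to weave through those trees. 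To realise such a graph I would look for a symmetric description as a voltage graph: a small base (multi)graph $B$ whose undirected edges furnish the cubic undirected part and which carries, at each vertex, enough arcs for out-degree one, together with a voltage assignment into a group $\Gamma$ with $|V(B)|\cdot|\Gamma|=30$ (for instance a ten-vertex base over $\mathbb{Z}_3$, or a two-vertex base over $\mathbb{Z}_{15}$). The virtue of a lift is that both degree conditions hold automatically and independently of the voltages, and by choosing the arc voltages appropriately the lifted directed subgraph can be made a single directed $30$-cycle (or a union of directed $6$-cycles), so it contains no directed cycle shorter than $g$.

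The real work --- and the step I expect to be hardest --- is proving that the girth of the lift is exactly $6$, that is, that no mixed cycle of length $2$, $3$, $4$, or $5$ survives. Such a cycle has one of a small number of shapes, according to how many of its steps are arcs, how many are edges, and how the two alternate around the cycle; each shape projects to a short closed walk in $B$ that backtracks on no edge, and it lifts to a genuine short cycle precisely when the corresponding signed sum of voltages vanishes in $\Gamma$. So the verification is a finite case analysis over these shapes, which for abelian $\Gamma$ reduces to a short list of linear congruences to be shown insoluble. The delicacy is that the congruences coming from different shapes compete --- a naive two-vertex construction over $\mathbb{Z}_{15}$ already runs out of room --- so the point is to choose $B$ and the voltages so that every shape is excluded at once, compatibly with the structural constraints above; the residual bookkeeping can be organised as a table of shapes or checked by machine, as in \cite{ref316}. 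A lift passing all these tests is a $(3,1,6)$-graph on $30$ vertices, so with the lower bound we conclude $f(3,1,6)=30$.
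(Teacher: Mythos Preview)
Your lower-bound computation is correct and matches the paper's. The gap is on the upper-bound side: you never actually exhibit a $(3,1,6)$-graph on $30$ vertices. What you have written is a search plan --- look among voltage-graph lifts with $|V(B)|\cdot|\Gamma|=30$, filter by the short-cycle congruences --- together with an honest admission that the obvious small bases ``run out of room.'' But the theorem is an existence claim, and its entire content is the witness; deferring that witness to \cite{ref316} is circular, since that is precisely the result being proved. A referee would not accept ``choose the voltages so that every shape is excluded at once'' without being told which voltages those are.

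The paper's proof does exactly what your plan anticipates, but it carries it out: it takes three copies of $\mathbb{Z}_{10}$ (so a three-vertex base over the cyclic group of order $10$, not the two-vertex or ten-vertex bases you propose), puts a directed $10$-cycle on each copy to supply the out-degree~$1$, and then specifies the undirected edges explicitly as $x_i y_{i\pm 2}$, $z_i z_{i+5}$, together with colour-matched edges from the $Z$-cycle back to the $X$- and $Y$-cycles. The girth check is then a concrete finite verification on that named graph. Your structural observations (the cubic undirected part must have girth $\ge 6$, arcs must join undirectedly distant vertices, the AHM tree must sit inside exactly) are all correct and are visibly satisfied by the paper's graph, but they are constraints on a search, not a proof. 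To close the gap you must write down an actual $30$-vertex graph and verify --- by the case analysis you outline, or simply by inspection --- that it has girth~$6$.
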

\begin{proof}
In this case the AHM bound is $30$.
To construct a $(3,1,6)$-graph $G$ of order $30$,
define the vertex set $V(G) = X \cup Y \cup Z$, where

\begin{equation*}
\begin{aligned}
X = & \{ x_i \, |\, 0 \leq i < 10 \} \\
Y = & \{ y_i \, |\, 0 \leq i < 10 \} \\
Z = & \{ z_i \, |\, 0 \leq i < 10 \}
\end{aligned}
\end{equation*}

The set of arcs $A(G)$ is
$$
\{ x_ix_{i+1} | 0 \leq i < 10 \} \cup\{ y_iy_{i+1} | 0 \leq i < 10 \} \cup\{ z_iz_{i+1} | 0 \leq i < 10 \}
$$

The set of edges $E(G)$ is
$$
\{ x_iy_{i\pm2} | 0 \leq i < 10 \} \cup \{ z_iz_{i+5} | 0 \leq i < 10 \}.
$$
In all cases, addition of subscripts is done modulo $10$.
\end{proof}

The graph described in the above proof is depicted in Figure~\ref{g316fig}.

\begin{figure}[ht]
\centering
\setlength{\abovecaptionskip}{20pt}
\tikzset{fontscale/.style = {font=\relsize{#1}}}
\begin{tikzpicture}[scale=0.70,line width=1pt]

\tikzstyle{fred}=[draw=black, fill=yellow, ultra thick,
  shape=circle, minimum height=0.3cm, inner sep=1, text=black, font=\bfseries];
\tikzstyle{arc}=[very thick,
                 decoration={markings,mark=at position 0.70 with {\arrow[scale=1.0,>=triangle 45]{>}}},
                 postaction={decorate}];

\newcommand\orad{6}
\newcommand\mrad{2.5}

\foreach \rot in {0,36,72,...,324}
{
  \begin{scope}[rotate=\rot]
      \draw[arc] (0:\orad) -- (36:\orad);
      \draw[arc] (0:\mrad) -- (36:\mrad);
      \draw[line width=1.2,black] (0:\orad) to [out=150,in=30] (72:\mrad);
      \draw[line width=1.2,black] (0:\orad) to [out=210,in=330] (288:\mrad);
  \end{scope}
}

\foreach \rot/\c in {0/red,36/blue,72/green,108/gray,144/orange,180/teal,216/black,
                         252/brown,288/white,324/pink}
{
    \begin{scope}[rotate=\rot]
        \node[fred,fill=\c] at (0:\orad) [fontscale=2] {};
        \node[fred,fill=\c] at (180:\mrad) [fontscale=2] {};
    \end{scope}
}
\end{tikzpicture}

%% --------------- between the parts of Figure -----------

\hspace{12mm}

\begin{tikzpicture}[scale=0.70,line width=1pt]

\tikzstyle{fred}=[draw=black, fill=yellow, ultra thick,
  shape=circle, minimum height=0.3cm, inner sep=1, text=black, font=\bfseries];
\tikzstyle{arc}=[very thick,
                 decoration={markings,mark=at position 0.70 with {\arrow[scale=1.0,>=triangle 45]{>}}},
                 postaction={decorate}];

\newcommand\irad{4.0}

    \foreach \rot in {0,36,72,...,324}
    {
    \begin{scope}[rotate=\rot]
        \draw[arc] (0:\irad) -- (36:\irad);
    \end{scope}
    }
    \foreach \rot in {0,36,...,144}
    {
    \begin{scope}[rotate=\rot]
        \draw[line width=1.2,black] (0:\irad) -- (180:\irad);
    \end{scope}
    }
    \foreach \rot/\c in {0/red,36/blue,72/green,108/gray,144/orange,180/teal,216/black,
                         252/brown,288/white,324/pink}
    {
    \begin{scope}[rotate=\rot]
        \node[fred,fill=\c] at (0:\irad) [fontscale=2] {};
    \end{scope}
    }
\end{tikzpicture}
\caption{The unique smallest $(3,1,6)$-graph of order $30$.  Vertices in the lower figure are
adjacent to vertices in the upper figure that have the same color.}
\label{g316fig}
\end{figure}

Next we present a few constructions where we were able to come relatively close
to the AHM bound.

\begin{theorem} 
\label{thmsummary}
\begin{eqnarray*}
\text{a)} & 52 \leq f(3,1,7) & \leq 60  \\
\text{b)} & 74 \leq f(3,1,8) & \leq 76  \\
\text{c)} & 29 \leq f(4,1,5) & \leq 34  \\
\text{d)} & 46 \leq f(4,1,6) & \leq 48  \\
\text{e)} & 40 \leq f(5,1,5) & \leq 50  \\
\text{f)} & 66 \leq f(5,1,6) & \leq 72  
\end{eqnarray*}
\end{theorem}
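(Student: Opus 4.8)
The plan is to prove the six lower bounds uniformly and then handle the upper bounds one at a time. Each lower bound is simply an evaluation of the AHM bound $f(r,1,g)\ge\sum_{i=0}^{g-1}n\bigl(r,\min(i,g-i-1)\bigr)$ using the Moore values $n(r,0)=1$, $n(r,1)=r+1$, $n(r,2)=r^2+1$, $n(r,3)=r^3-r^2+r+1$. For instance $(r,g)=(3,7)$ yields summands $1,4,10,22,10,4,1$, which total $52$; the remaining cases give $74$, $29$, $46$, $40$, and $66$ in exactly the same fashion. So the first step is to record these six arithmetic evaluations and cite the AHM bound.

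For the upper bound in part (e) I would invoke the Cage Bound directly: the Hoffman--Singleton graph is the $(7,5)$-cage and is Hamiltonian, so cyclically orienting any Hamiltonian cycle produces a $(5,1,5)$-graph of order $50$. For the other five upper bounds the Cage Bound is far too weak --- the relevant $(r+2,g)$-cages have orders $40$, $62$, $90$, and at least $106$ and $170$ --- so explicit constructions are needed. Following the style of Theorems~\ref{thm2zga}--\ref{thm316} (which are, in effect, small cyclic voltage lifts), I would build each graph as a lift of a small mixed base graph $B$ of undirected degree $r$ and out-degree $1$ over a group $\mathbb{Z}_n$, with a voltage assignment $\alpha$ on the edges and arcs of $B$ chosen so that the lift has order $|V(B)|\cdot n$ equal to the target bound in parts (a)--(d) and (f), namely $60$, $76$, $34$, $48$, $72$. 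Regularity in both degrees transfers automatically from $B$ to the lift, so the entire content lies in controlling the girth.

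The girth verification reduces to a finite check. A cycle of length $\ell$ in the lift projects to a closed walk of length $\ell$ in $B$ whose net voltage, with arc orientations respected, vanishes in $\mathbb{Z}_n$. Since $B$ is small and $g\le 8$, one enumerates all closed walks in $B$ of length $<g$ --- including walks that interleave arcs with edges and walks that revisit vertices of $B$ --- verifies that each has nonzero net voltage, and exhibits one closed walk of length exactly $g$ with zero net voltage. The specific numbers strongly suggest that these base graphs and voltage assignments were located by a targeted computer search; in that case I would present the resulting vertex, edge, and arc sets explicitly, as is done for the earlier constructions, and re-verify the girth directly on the lift.

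The main obstacle is precisely this girth verification for the five constructed upper bounds. In the mixed setting the family of potential short cycles is larger and less symmetric than in the pure-graph or pure-digraph cases: a short cycle may run along the directed spanning cycle(s), along undirected edges, or along any interleaving of the two, and every such combination has to be ruled out. Producing a transparent, checkable exclusion argument rather than a sprawling brute-force case analysis is where the effort concentrates, and for $g=7,8$ in parts (a) and (b) the size of the search --- together with confirming that the constructions really are as economical as claimed --- is the delicate part.
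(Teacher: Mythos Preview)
Your plan matches the paper's approach exactly: the lower bounds are all the AHM bound (and your arithmetic is correct), part~(e) uses a Hamiltonian cycle in the Hoffman--Singleton graph, and the remaining five upper bounds are cyclic voltage-graph lifts --- directed $n$-cycles (one per base vertex, coming from a directed loop with voltage~$1$) joined by undirected matchings whose voltages are chosen to avoid short mixed cycles, with the specific voltage assignments found by computer search. The paper even remarks that these constructions can be read as voltage graphs over $\mathbb{Z}_n$, just as you anticipate, and the girth is verified directly rather than via a clean structural argument, which is consistent with your identification of the girth check as the laborious step.
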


All lower bounds are derived from the AHM bound.  The
upper bound constructions are handled separately.

{\bf (a) $ f(3,1,7) \leq 60$}

The directed subgraph for the $(3,1,7)$-graph consists of $6$ disjoint directed
$10$-cycles.  Each of these cycles is represented by a node in Figure~\ref{fig317}.
It will be useful to imagine the $60$ vertices of the graph labeled
$v_{i,j}$ for $0 \leq i < 6$ and $0 \leq j < 10$.  In this way, node $i$ in the
figure represents all vertices with first subscript $i$.  Each these sets of $10$ vertices
induces a directed $10$-cycle labeled in the natural order.

Various pairs of $10$-cycles are connected by undirected matchings.  The arcs in the
figure do not represent arcs in the graph, but are there to specify
the matching between pairs of $10$-cycles.
For example, the arc from node $1$ to node $2$ is labeled with
a $4$.  This means there is a matching connecting $10$-cycle number $1$ to $10$-cycle
number $2$ such that vertex $i$ in the first $10$-cycle is adjacent to vertex
$i+4$ in the second $10$-cycle, i.e., $v_{1,i}$ is adjacent to $v_{2,i+4}$, and again
subscript addition is modulo $10$.

The reader familiar
with voltage graphs can think of the arc labels as voltage assignments from the cyclic
group of order $10$.

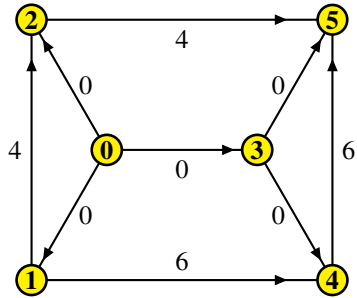
\begin{figure}[ht]
\centering
\begin{tikzpicture}[scale=1.0,line width=1pt]
\tikzstyle{fred}=[draw=black, fill=yellow, % thick,
  shape=circle, minimum height=3mm, inner sep=1, text=black, font=\bfseries];
\tikzstyle{arc}=[thick,
                 decoration={markings,mark=at position 0.85 with {\arrow[scale=0.6,>=triangle 45]{>}}},
                 postaction={decorate}];
\tikzstyle{edge}=[draw=black, thick];

\coordinate (v0) at (-1,0);
\coordinate (v1) at (-2,-1.732);
\coordinate (v2) at (-2, 1.732);
\coordinate (v3) at (1,0);
\coordinate (v4) at (2,-1.732);
\coordinate (v5) at (2, 1.732);

\draw[arc] (v0) -- (v1) node [midway,right] {0};
\draw[arc] (v0) -- (v2) node [midway,right] {0};
\draw[arc] (v0) -- (v3) node [midway,below] {0};
\draw[arc] (v1) -- (v2) node [midway,left] {4};
\draw[arc] (v1) -- (v4) node [midway,above] {6};
\draw[arc] (v2) -- (v5) node [midway,below] {4};
\draw[arc] (v3) -- (v4) node [midway,left] {0};
\draw[arc] (v3) -- (v5) node [midway,left] {0};
\draw[arc] (v4) -- (v5) node [midway,right] {6};

\node[fred] at (v0) {0};
\node[fred] at (v1) {1};
\node[fred] at (v2) {2};
\node[fred] at (v3) {3};
\node[fred] at (v4) {4};
\node[fred] at (v5) {5};

\end{tikzpicture}
\caption{Structure of an $(3,1,7)$-graph of order $60$.  Each node represents a directed $10$-cycle.}
\label{fig317}
\end{figure}

{\bf (b) $ f(3,1,8) \leq 76$}

Construct a $(3,1,8)$-graph $G$ on vertices $v_{i,j}$ for $0 \leq i < 2$ and
$0 \leq j < 38$.  For fixed $i$, the $38$ vertices $v_{i,j}$ form
a directed cycle, labeled in the natural order.
The edge set of $G$ is comprised of edges of one of the following forms:

\vspace{-18pt}
\begin{align*}
  v_{0,j}&v_{0,j+7}  \\[-8pt]
  v_{0,j}&v_{0,j-7}  \\[-8pt]
  v_{1,j}&v_{1,j+11} \\[-8pt]
  v_{1,j}&v_{1,j-11} \\[-8pt]
  v_{0,j}&v_{1,j}
\end{align*}

The automorphism group of $G$ has order $38$, with each directed $38$-cycle
comprising an orbit.

{\bf (c) $ f(4,1,5) \leq 34$}

An example $(4,1,5)$-graph of order $34$ is shown in Figure~\ref{fig415}.
The directed subgraph consists of two $17$-cycles, as shown in the right
half of the figure.  The undirected subgraph is shown in the left half
of the figure.  We identify the vertices in the left figure with the
corresponding vertices in the right figure.
Label the vertices
$v_{i,j}$ for $0 \leq i < 2$ and $0 \leq j < 17$, where we imagine
the vertices with first subscript $0$ are those on the outer circle.
Then there are arcs from $v_{0,i}$ to $v_{0,i+7}$ and from
$v_{1,i}$ to $v_{1,i+6}$, and edges $v_{0,j} v_{1,j-2}$ and $v_{0,j} v_{1,j+2}$.

\begin{figure}[ht]
\centering
\begin{tikzpicture}[scale=0.60,line width=1pt]
\tikzstyle{fred}=[draw=black, fill=yellow, thick,
  shape=circle, minimum height=2mm, inner sep=1, text=black, font=\bfseries];
\tikzstyle{arc}=[thick,
                 decoration={markings,mark=at position 0.95 with {\arrow[scale=0.6,>=triangle 45]{>}}},
                 postaction={decorate}];
\tikzstyle{edge}=[draw=black, thick];

\newcommand\rot{360.0/17.0};
\newcommand\sot{540.0/34.0};

\foreach \theta in {0,1,...,16}
{
    \begin{scope}[rotate=\theta*\rot]
            \draw[edge] (0:5) -- (\rot:5);
            \draw[edge] (0:2) -- (8*\rot:2);
            \draw[edge] (0:5) -- (2*\rot:2);
            \draw[edge] (0:5) -- (-2*\rot:2);
    \end{scope}
}

\foreach \theta in {0,1,...,16}
{
    \begin{scope}[rotate=\theta*\rot]
        \node[fred] at (0:5) {};
        \node[fred] at (0:2) {};
    \end{scope}
}

\begin{scope}[xshift=11cm]
\foreach \theta in {0,1,...,16}
{
    \begin{scope}[rotate=\theta*\rot]
            \draw[arc] (0:5) to [out=135,in=45-\sot] (7*\rot:5);
            \draw[arc] (0:2) -- (6*\rot:2);
    \end{scope}
    \foreach \theta in {0,1,...,16}
    {
        \begin{scope}[rotate=\theta*\rot]
            \node[fred] at (0:5) {};
            \node[fred] at (0:2) {};
        \end{scope}
    }
}
\end{scope}

\end{tikzpicture}
\caption{A $(4,1,5)$-graph of order $34$.  The figure on the left shows the edges
and the figure on the right shows the arcs.}
\label{fig415}
\end{figure}
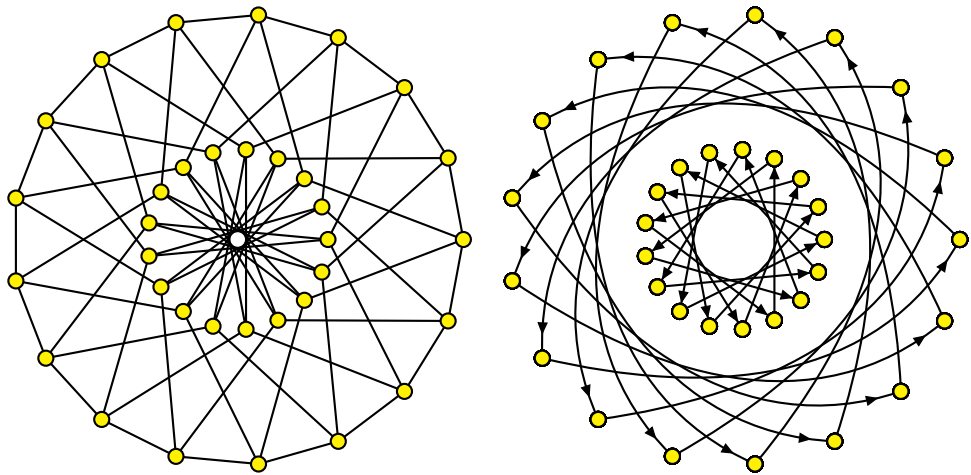

{\bf (d) $ f(4,1,6) \leq 48$}

For this case, refer to Figure~\ref{fig416}, which is similar to Figure~\ref{fig317} used
for part (a).
Here each node in the figure represents a directed $8$-cycle, and various of these
$8$-cycles are joined by undirected matchings, as in part (a).
Once again, the reader familiar
with voltage graphs can think of the labels as voltage assignments from the cyclic
group of order $8$.

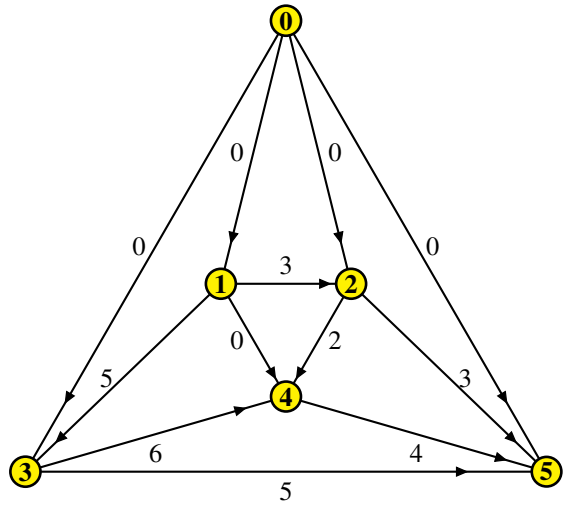
\begin{figure}[ht]
\centering
\begin{tikzpicture}[scale=1.00,line width=1pt]
\tikzstyle{fred}=[draw=black, fill=yellow, % thick,
  shape=circle, minimum height=3mm, inner sep=1, text=black, font=\bfseries];
\tikzstyle{arc}=[thick,
                 decoration={markings,mark=at position 0.85 with {\arrow[scale=0.6,>=triangle 45]{>}}},
                 postaction={decorate}];
\tikzstyle{edge}=[draw=black, thick];

\coordinate (v0) at (90:4);
\coordinate (v1) at (150:1);
\coordinate (v2) at (30:1);
\coordinate (v3) at (210:4);
\coordinate (v4) at (270:1);
\coordinate (v5) at (330:4);

\draw[arc] (v0) -- (v1) node [midway,left] {0};
\draw[arc] (v0) -- (v2) node [midway,right] {0};
\draw[arc] (v0) -- (v3) node [midway,left] {0};
\draw[arc] (v0) -- (v5) node [midway,right] {0};
\draw[arc] (v1) -- (v2) node [midway,above] {3};
\draw[arc] (v1) -- (v3) node [midway,left] {5};
\draw[arc] (v1) -- (v4) node [midway,left] {0};
\draw[arc] (v2) -- (v4) node [midway,right] {2};
\draw[arc] (v2) -- (v5) node [midway,right] {3};
\draw[arc] (v3) -- (v4) node [midway,below] {6};
\draw[arc] (v3) -- (v5) node [midway,below] {5};
\draw[arc] (v4) -- (v5) node [midway,below] {4};

\node[fred] at (v0) {0};
\node[fred] at (v1) {1};
\node[fred] at (v2) {2};
\node[fred] at (v3) {3};
\node[fred] at (v4) {4};
\node[fred] at (v5) {5};

\end{tikzpicture}
\caption{Structure of an $(4,1,6)$-graph of order $48$.  Each node represents a directed $8$-cycle.}
\label{fig416}
\end{figure}

{\bf (e) $ f(5,1,5) \leq 50$}

A $(5,1,5)$-graph can be easily obtained from the Hoffman-Singleton graph by cyclically
orienting a directed Hamiltonian cycle, of which there are many.

{\bf (f) $ f(5,1,6) \leq 72$}

The graph is depicted in Figure~\ref{fig516}.  This time we replace vertices in the
base graph by $12$-cycles.

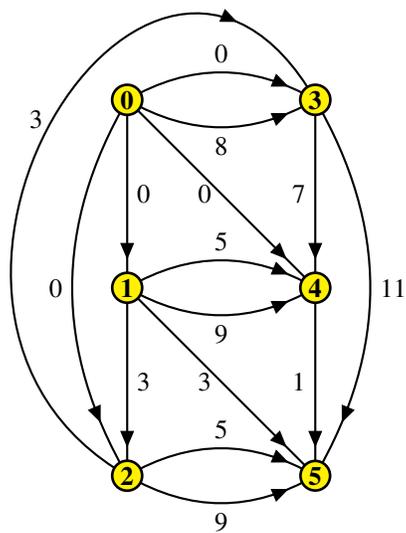
\begin{figure}[ht]
\hspace{38mm}
\begin{tikzpicture}[scale=1.25,line width=1pt]
\tikzstyle{fred}=[draw=black, fill=yellow, % thick,
  shape=circle, minimum height=3mm, inner sep=1, text=black, font=\bfseries];
\tikzstyle{arc}=[thick,
                 decoration={markings,mark=at position 0.85 with {\arrow[scale=0.8,>=triangle 45]{>}}},
                 postaction={decorate}];
\tikzstyle{edge}=[draw=black, thick];

\coordinate (v0) at (0,4);
\coordinate (v1) at (0,2);
\coordinate (v2) at (0,0);
\coordinate (v3) at (2,4);
\coordinate (v4) at (2,2);
\coordinate (v5) at (2,0);

\draw[arc] (v0) -- (v1) node [midway,right] {0};
\draw[arc] (v0) to [out=240,in=120] node [midway,left] {0} (v2);
\draw[arc] (v1) -- (v2) node [midway,right] {3};
\draw[arc] (v3) -- (v4) node [midway,left] {7};
\draw[arc] (v3) to [out=300,in=60] node [midway,right] {11} (v5);
\draw[arc] (v4) -- (v5) node [midway,left] {1};

\draw[arc] (v0) -- (v4) node [midway,left] {0};
\draw[arc] (v1) -- (v5) node [midway,left] {3};

\draw[arc] (v0) to [out=30,in=150] node [midway,above] {0} (v3);
\draw[arc] (v0) to [out=330,in=210] node [midway,below] {8} (v3);
\draw[arc] (v1) to [out=30,in=150] node [midway,above] {5} (v4);
\draw[arc] (v1) to [out=330,in=210] node [midway,below] {9} (v4);
\draw[arc] (v2) to [out=30,in=150] node [midway,above] {5} (v5);
\draw[arc] (v2) to [out=330,in=210] node [midway,below] {9} (v5);

\draw[arc,looseness=2] (v2) to [out=150,in=120] node [midway,left] {3} (v3);

\node[fred] at (v0) {0};
\node[fred] at (v1) {1};
\node[fred] at (v2) {2};
\node[fred] at (v3) {3};
\node[fred] at (v4) {4};
\node[fred] at (v5) {5};

\end{tikzpicture}
\caption{Structure of a $(5,1,6)$-graph of order $72$.  Each node represents a directed $12$-cycle.}
\label{fig516}
\end{figure}

\clearpage
\newpage

\section{Further Work}
\label{sec:further}

There are at least four areas for further work on this topic.

\begin{enumerate}
\item
Table~\ref{tabr1g} lists cases where we were able to either determine an exact value for $f(r,z,g)$
or come reasonably close.  Note that for the case $(r,z,g) = (3,1,8)$ there are only two remaining
possibilities, either there is a graph achieving the AHM bound of $74$, or else the graph presented
here is a cage.  Perhaps techniques from Linear Algebra could be used to eliminate $74$ and settle
the issue.

\item
Recall that the directed subgraph of an $(r,1,g)$-graph consists of a set of directed cycles
whose lengths partition the order of the graph.
For most of the graph constructions presented here, these partitions contained equal parts.
At first, our computer searches were not restricted to partition with equal parts, but it
turned out that best constructions we found for smaller cases used such partitions.
So for cases where the search space was deemed too large for an exhaustive search,
we restricted the search to partitions with equal parts.  In each
of the cases listed in the table, there is no smaller graph with an equal partition.
Given enough computer time, one could probably complete exhaustive searches through all
partitions and settle the unresolved cases listed here.

\item
In the section on $r=2$, three theorems were given providing upper bounds for $f(2,z,g)$.
Now the undirected subgraph of a $(2,z,g)$-graph consists of disjoint cycles, i.e.,
(undirected) cages of degree $2$.  The reader will notice that these theorems
could each be generalized for $r > 2$, replacing the cycles by $r$-cages.

\item
As noted above, several of the constructions in this paper can be viewed as voltage
graphs over cyclic groups.  But they were not found by a search for voltage graphs,
so they were not presented as such.  Voltage graphs over more interesting groups
might be a good place to look for $(r,z,g)$-cages.
\end{enumerate}

\begin{table}[h]
\centering
\renewcommand{\arraystretch}{1.2}
% \begin{adjustbox}{width=1.1\textwidth,center=\textwidth}
\begin{tabular}{|r|r|r||r|r|r|} \hline
 r & z & g & Lower & Exact & Upper \\ \hline \hline
 2 & 2 & 5 &    & 19 &    \\ \hline
 2 & 2 & 6 &    & 27 &    \\ \hline
 3 & 1 & 5 &    & 24 &    \\ \hline
 3 & 1 & 6 &    & 30 &    \\ \hline
 3 & 1 & 7 & 52 &    & 60 \\ \hline
 3 & 1 & 8 & 74 &    & 76 \\ \hline
 4 & 1 & 5 & 29 &    & 34 \\ \hline
 4 & 1 & 6 & 46 &    & 48 \\ \hline
 5 & 1 & 5 & 40 &    & 50 \\ \hline
 5 & 1 & 6 & 66 &    & 72 \\ \hline
\hline
\end{tabular}
% \end{adjustbox}
\caption{Bounds for small $f(r,z,g)$.
For each of the values of $r$, $z$, and $g$,
either the exact value, or our best lower and upper bounds are given.
The result for $(2,2,5)$ is due to
Claudia De La Cruz and Miguel Piza\~{n}a (unpublished).
}
\label{tabr1g}
\end{table}

\clearpage
\newpage

\nocite{*}
\bibliographystyle{abbrvnat}
\bibliography{mixedfinal}
\label{sec:biblio}

\end{document}